\documentclass{amsproc}

\usepackage[hiresbb]{graphicx}
\usepackage{xcolor}

\usepackage{amsmath,amsthm,amssymb,mathtools}
\allowdisplaybreaks

\usepackage[a4paper,margin=28mm]{geometry}

\usepackage{here}
\usepackage{hyperref}
\usepackage[abbrev]{amsrefs}
\renewcommand{\MR}[1]{}

\usepackage{setspace}
\usepackage{aliascnt}
\usepackage[capitalize,nameinlink,noabbrev,nosort]{cleveref}
\hypersetup{
  colorlinks=true,
  linkcolor=brown,
  citecolor=brown,
  filecolor=brown,
  urlcolor=brown,
  pdftitle={Distribution of Kaneko's val function},
  pdfauthor={Toshiki Matsusaka},
  pdfsubject={2020 Mathematics Subject Classification: 11F03, 37A44, 11A55},
  pdfkeywords={cycle integrals, val function, closed geodesics, continued fractions, ergodic theory},
}

\makeatletter
\@namedef{subjclassname@2020}{\textup{2020} Mathematics Subject Classification}
\makeatother

\theoremstyle{remark}

\newaliascnt{remark}{theoremcounter}
\newtheorem{remark}[remark]{Remark}
\aliascntresetthe{remark}

\theoremstyle{definition}

\newaliascnt{definition}{theoremcounter}
\newtheorem{definition}[definition]{Definition}
\aliascntresetthe{definition}

\newaliascnt{example}{theoremcounter}

\aliascntresetthe{example}

\theoremstyle{plain}

\newaliascnt{lemma}{theoremcounter}
\newtheorem{lemma}[lemma]{Lemma}
\aliascntresetthe{lemma}

\newaliascnt{proposition}{theoremcounter}
\newtheorem{proposition}[proposition]{Proposition}
\aliascntresetthe{proposition}

\newaliascnt{corollary}{theoremcounter}

\aliascntresetthe{corollary}

\newaliascnt{conjecture}{theoremcounter}
\newtheorem{conjecture}[conjecture]{Conjecture}
\aliascntresetthe{conjecture}

\newaliascnt{theorem}{theoremcounter}
\newtheorem{theorem}[theorem]{Theorem}
\aliascntresetthe{theorem}

\newaliascnt{question}{theoremcounter}

\aliascntresetthe{question}

\newaliascnt{exercise}{theoremcounter}

\aliascntresetthe{exercise}

\numberwithin{equation}{section}

\newcommand{\N}{\mathbb{N}}
\newcommand{\Z}{\mathbb{Z}}
\newcommand{\Q}{\mathbb{Q}}
\newcommand{\R}{\mathbb{R}}
\providecommand{\C}{}
\renewcommand{\C}{\mathbb{C}}

\newcommand{\dd}{\mathrm{d}}
\newcommand{\bbH}{\mathbb{H}}

\ExplSyntaxOn
\clist_map_inline:nn
  { A,B,C,D,E,F,G,H,I,J,K,L,M,N,O,P,Q,R,S,T,U,V,W,X,Y,Z }
  {
    \cs_new:cpn { cal#1 } { \mathcal{#1} }
  }
\ExplSyntaxOff

\ExplSyntaxOn
\clist_map_inline:nn
  { A,B,C,D,E,F,G,H,I,J,K,L,M,N,O,P,Q,R,S,T,U,V,W,X,Y,Z }
  {
    \cs_new:cpn { mat#1 } { \mathrm{#1} }
  }
\ExplSyntaxOff

\DeclareMathOperator{\ImNew}{Im}
\renewcommand{\Im}{\ImNew}
\DeclareMathOperator{\ReNew}{Re}
\renewcommand{\Re}{\ReNew}

\newcommand{\id}{\mathrm{id}}
\newcommand{\SL}{\mathrm{SL}}
\newcommand{\PSL}{\mathrm{PSL}}
\newcommand{\PGL}{\mathrm{PGL}}

\DeclareMathOperator{\tr}{tr}
\DeclareMathOperator{\val}{val}
\DeclareMathOperator*{\susp}{susp}

\newcommand{\pmat}[1]{\begin{pmatrix}#1\end{pmatrix}}

\newcommand{\smat}[1]{\bigl(\begin{smallmatrix}#1\end{smallmatrix}\bigr)}

\begin{document}

\title[]{Distribution of Kaneko's val function}

\author[]{Toshiki Matsusaka}
\address{Faculty of Mathematics, Kyushu University, Motooka 744, Nishi-ku, Fukuoka 819-0395, Japan}
\email{matsusaka@math.kyushu-u.ac.jp}


\keywords{cycle integrals, val function, closed geodesics, continued fractions, ergodic theory}
\subjclass[2020]{Primary 11F03, Secondary 37A44, 11A55}



\begin{abstract}
	Kaneko's val function is defined as the normalized cycle integral of the elliptic modular $j$-function along closed geodesics on the modular surface. We prove that, when primitive hyperbolic conjugacy classes are ordered by geodesic length, its values concentrate at the single point 720. More generally, an analogous concentration result holds for every weakly holomorphic modular function $f$ of weight 0, with the concentration point given by Atkin's inner product $(f, 1)_{\mathrm{At}}$. The proof combines an equidistribution theorem following Pollicott with the ergodicity of a continued-fraction suspension flow and uses the decomposition formula of Bengoechea--Imamo\={g}lu to construct a bounded continuous observable.
\end{abstract}

\maketitle



\section{Introduction}

Inspired by Hecke's classical work, Kaneko~\cite{Kaneko2009} introduced the \emph{val function} as the constant term in the hyperbolic Fourier expansion of the elliptic modular $j$-function at a real quadratic irrationality. Equivalently, it is the average of $j$ along the closed geodesic $C_\gamma$ on $\Gamma\backslash\bbH$ associated with a hyperbolic element $\gamma\in\Gamma\coloneqq\PSL_2(\Z)$. For a $\Gamma$-invariant holomorphic function $f\colon\bbH\to\C$, set
\[
	I_f(\gamma)\coloneqq\int_{C_\gamma}f(\tau) \frac{|\dd \tau|}{\Im \tau}
\]
and $\ell(C_\gamma)\coloneqq I_1(\gamma)$. Thus, for
\[
	j(\tau)=q^{-1}+744+196884q+21493760q^2+\cdots,
	\qquad q\coloneqq e^{2\pi i\tau},
\]
Kaneko's function is $\val(\gamma)\coloneqq I_j(\gamma)/I_1(\gamma)$.

Kaneko established basic properties of the val function and made several observations based on extensive numerical experiments. For $\gamma_\phi=\smat{2&1\\1&1}$, whose attracting fixed point is the golden ratio $\phi\coloneqq(1+\sqrt5)/2$, he found numerically that $\val(\gamma_\phi)=706.324813540\ldots$. Kaneko asked the following three questions.
\begin{enumerate}
	\item[(i)] Is $\Re\val(\gamma)\in[\val(\gamma_\phi),744]$ for every hyperbolic $\gamma\in\Gamma$, with both bounds optimal?
	\item[(ii)] How is the val function related to Diophantine approximation, and how does it reflect the Diophantine properties of the fixed points of $\gamma$?
	\item[(iii)] Is $\Im\val(\gamma) \in (-1,1)$, and more generally, how are the imaginary parts distributed over hyperbolic conjugacy classes?
\end{enumerate}
He sought rigorous formulations and proofs of these observations and an arithmetic interpretation of the val function.

Regarding (i) and (ii), Bengoechea--Imamo\={g}lu~\cites{BI2019,BI2020} established continuity properties and explicit bounds. Murakami~\cites{Murakami2021, Murakami2023} obtained related results. In particular, Bengoechea--Imamo\={g}lu proved $\Re\val(\gamma)\leq744$ for every hyperbolic $\gamma\in\Gamma$. The optimality of $744$ had previously been established by P\"{a}pcke~\cite{Papcke2016}, and Bengoechea--Herrero--Imamo\={g}lu~\cites{BHI2026,BHI2026pre} proved the complementary optimal lower bound $\Re\val(\gamma)\geq\val(\gamma_\phi)$, thereby settling (i).

The conjectural bound $-1<\Im\val(\gamma)<1$ in problem~(iii), however, remains open, as does the distribution problem posed there. In this paper, we solve the latter when primitive hyperbolic conjugacy classes are ordered by geodesic length.

\cref{fig:val-distr} plots the values $\val(\gamma)$ in the complex plane.

\begin{figure}[H]
	\centering
	\includegraphics[width=12.5cm]{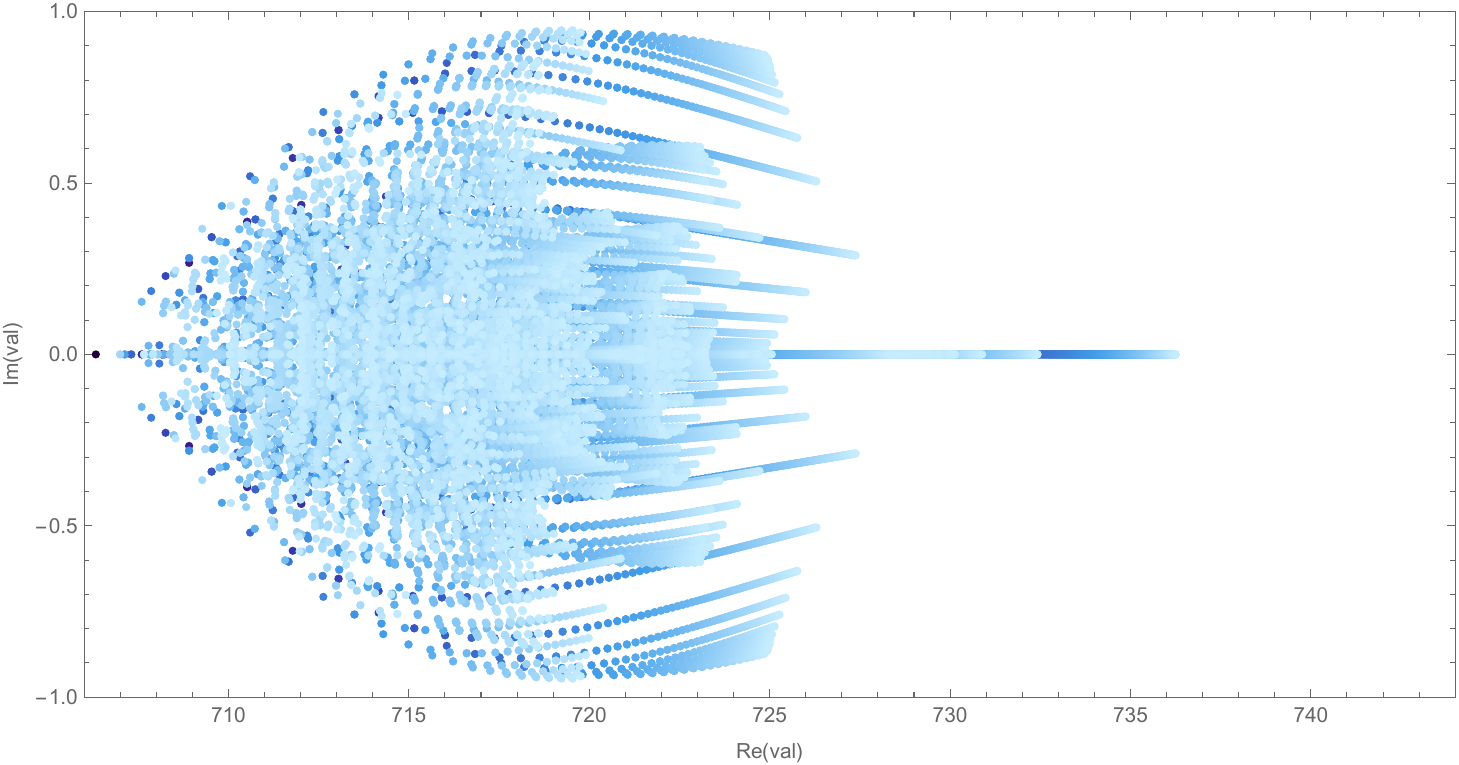}
	\caption{The plot consists of approximately 37,000 points corresponding to conjugacy classes of primitive hyperbolic elements $\gamma \in \Gamma$ satisfying $\ell(C_\gamma) \leq 13$, where the color becomes lighter as the length $\ell(C_\gamma)$ increases.}
	\label{fig:val-distr}
\end{figure}

Here, a hyperbolic element is called \emph{primitive} if it is not a proper power of another hyperbolic element of $\Gamma$. Equivalently, the corresponding closed geodesic is primitive, that is, it is not obtained by traversing a shorter closed geodesic more than once. A similar numerical picture appeared in a proceedings article by Kaneko and Shigeki~\cite{KanekoShigeki2010}, where the distribution problem was also raised. Regarding primitive closed geodesics as analogues of prime numbers, this suggests a distribution problem analogous in spirit to the Sato--Tate problem.

Let $\Pi$ denote the set of conjugacy classes of primitive hyperbolic elements in $\Gamma$, and put
\[
	\Pi(L) \coloneqq \{[\gamma]\in\Pi:\ell(C_\gamma)\leq L\},
	\qquad \#\Pi(L)\sim\mathrm{Li}(e^L)\quad(L\to\infty),
\]
where the asymptotic is the prime geodesic theorem~\cite[(44)]{Sarnak2007}. Our main theorem gives a particularly simple answer: the values of the val function concentrate at $720$. More generally, let $M_0^!(\Gamma)$ denote the space of weakly holomorphic modular functions (of weight $0$) for $\Gamma$. For $f\in M_0^!(\Gamma)$, set
\[
	\val_f(\gamma)\coloneqq\frac{I_f(\gamma)}{I_1(\gamma)},
	\qquad
	E_2(\tau)\coloneqq1-24\sum_{n\geq1}\sigma_1(n)q^n,
\]
where \(\sigma_1(n)\coloneqq\sum_{d\mid n}d\), so that $\val_j=\val$. We write $\alpha_f\coloneqq(f,1)_{\mathrm{At}}$ for Atkin's inner product of $f$ with the constant function $1$. By Kaneko--Zagier~\cite[Proposition~3]{KanekoZagier1998}, $\alpha_f$ is precisely the constant term of $fE_2$. Then the following holds.

\begin{theorem}\label{thm:main1}
	Let $f\in M_0^!(\Gamma)$. For every $\varepsilon > 0$,
	\[
		\lim_{L\to\infty} \frac{\#\{[\gamma] \in \Pi(L) : |\val_f(\gamma)-\alpha_f| < \varepsilon\}}{\#\Pi(L)} =1.
	\]
	In particular, since $\alpha_j=720$, the values of the val function concentrate at $720$.
\end{theorem}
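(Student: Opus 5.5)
Here is the plan. Geodesics become equidistributed on the unit tangent bundle $T^1(\Gamma\backslash\bbH)$ with respect to Liouville measure, when primitive classes are ordered by length (Bowen, Pollicott). So for a \emph{bounded continuous} observable $F$ on $T^1(\Gamma\backslash\bbH)$ we have
\[
	\frac{1}{\#\Pi(L)}\sum_{[\gamma]\in\Pi(L)}\Bigl|\frac{1}{\ell(C_\gamma)}\int_{C_\gamma}F-\int F\,\dd\mu\Bigr|\to0 .
\]
This is a variance or "almost all" version, not just an average version. It should follow from Pollicott's large-deviation or equidistribution results for closed orbits of the geodesic flow: the empirical measures of almost all closed orbits of length at most $L$ converge weakly to $\mu$. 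Convergence of the averages alone would not suffice.

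The difficulty is that $f$ is unbounded at the cusp: $f=\sum_{n\ge -N}c_nq^n$. The key step is therefore to rewrite $I_f(\gamma)$ as the integral of a bounded continuous function $F_f$ along the lift of $C_\gamma$. Following Bengoechea--Imamo\={g}lu, I would decompose the cycle integral along the reduced cycle of $\gamma$ given by the minus continued fraction (or regular continued fraction) of its fixed point $w=[[a_0,a_1,\dots]]$. Each segment near the cusp contributes a term like $\int e^{2\pi i n\tau}|\dd\tau|/\Im\tau$ over a geodesic arc of large height, and $f$ minus its principal part is bounded on the arc. The principal part $q^{-n}$ integrated over a geodesic semicircle of radius $r$ gives an explicit oscillatory integral. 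Bengoechea--Imamo\={g}lu's decomposition expresses this as a function of the endpoints that stays bounded and continuous, because the oscillation averages out. Subtracting a suitable $\Gamma$-invariant correction, such as a coboundary $\frac{\dd}{\dd t}H$ along the flow (which integrates to $0$ on closed orbits), should leave a bounded continuous observable $F_f$ on the suspension space of the continued-fraction map with $I_f(\gamma)=\int_{C_\gamma}F_f$. I expect the main obstacle to be making $F_f$ genuinely bounded and continuous, uniformly near the cusp. I would first try to bound the excursion integrals using the explicit formula for $\int_{\text{arc}}e^{-2\pi i n\tau}\,\dd\tau/\Im\tau$ from the decomposition formula.

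Once $F_f$ is constructed, the equidistribution above gives $\val_f(\gamma)\to\int F_f\,\dd\mu$ in the "almost all" sense. To identify the constant, I would use ergodicity of the continued-fraction suspension flow (which is isomorphic to the geodesic flow). By Birkhoff, $\int F_f\,\dd\mu$ equals the limit of the averages of $F_f$ along a $\mu$-generic geodesic, so it is linear in $f$. It then suffices to compute it on a basis, namely $1$ and $j_n=q^{-n}+O(q)$. For $f=1$ it equals $1=\alpha_1$. For $q^{-n}$-type terms I would compute the Gauss-measure expectation of the cusp excursion contributions. Alternatively, I would note that the bounded part gives $\int_{\Gamma\backslash\bbH}$ of the regularized $f$, and the regularized Petersson integral $\int^{\mathrm{reg}} f\,\dd\mu/\mathrm{vol}$ equals the constant term of $fE_2$, which is Atkin's $(f,1)_{\mathrm{At}}$ by Kaneko--Zagier. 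This yields the limit $\alpha_f$, and in particular $\alpha_j=744-24=720$.
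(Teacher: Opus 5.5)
Your architecture matches the paper's: a bounded continuous observable built from the Bengoechea--Imamo\={g}lu decomposition, equidistribution of closed orbits, and an identification of the constant. But the step that actually proves concentration is missing. You put all the weight on an ``almost all'' equidistribution statement for the noncompact modular surface, leave it to an unspecified large-deviation result, and assert that convergence of averages alone would not suffice.

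Averaged equidistribution for \emph{all} bounded continuous test functions does suffice once the limit measure is ergodic, and this is exactly the paper's argument. Put $g=\calJ_f-\alpha_f$ and $A_Rg\coloneqq\frac1R\int_0^R g\circ\psi_s\,\dd s$. Then:
\begin{itemize}
\item $A_Rg$ is again bounded and continuous;
\item flow invariance of $m_C$ gives $m_C(g)=m_C(A_Rg)$, so $|m_C(g)|\le m_C(|A_Rg|)$;
\item averaged equidistribution applied to $|A_Rg|$ yields
\[
\limsup_{L\to\infty}\frac{1}{\#\calP(L)}\sum_{C\in\calP(L)}|m_C(g)|\le\int_{X_{\susp}}|A_Rg|\,\dd m_{\susp},
\]
and the right-hand side tends to $0$ as $R\to\infty$ by the mean ergodic theorem and ergodicity of $(\psi_s)$.
\end{itemize}
Chebyshev's inequality finishes. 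This proves precisely your displayed $L^1$ statement, and it is the only place ergodicity is needed. In your plan ergodicity is instead spent on identifying the constant via Birkhoff, where it plays no role: $f\mapsto\int F_f\,\dd\mu$ is linear simply because $f\mapsto F_f$ is.

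The observable is also left at the heuristic stage you yourself flag as the main obstacle. Your heuristic is sound: for $n\neq0$, $q^{-n}$ integrates to $0$ over a horizontal period, so a cusp excursion contributes only a constant-term part proportional to its length, plus $O(1)$. However, no coboundary correction is needed. The decomposition already places every term on the fixed compact arc $\calA$, so a run of $a$ equal letters contributes $\Phi_f(\omega)=\int_{\calA}f(z)B_a(z;u,y)\,\dd z=O_f(1+\log a)$, while the roof satisfies $r(\omega)\ge\log a$. Hence $\Phi_f/r$ is bounded and continuous on $\Sigma$. Setting $\calJ_f([\omega,t])=(\Phi_f/r)(\omega)\,\rho(t/r(\omega))$ with $\rho(0)=\rho(1)=0$ then gives a function continuous across the identifications $[\omega,r(\omega)]=[\sigma\omega,0]$.

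Two further points need attention in your version.
\begin{itemize}
\item You quote equidistribution on $T^1(\Gamma\backslash\bbH)$ but build $F_f$ on the suspension, which is only measurably isomorphic to it. You need either equidistribution directly on $X_{\susp}$ (the paper proves this in its appendix via Ustinov's count) or a transfer argument.
\item Your regularized-integral route gives the correct value, since $\frac3\pi\int^{\mathrm{reg}}f\,\dd\mu$ is the constant term of $fE_2$. But equating it with $\int F_f\,\dd\mu$ needs a truncation argument, because $f$ itself is not integrable near the cusp. The paper avoids this by computing $\int\calJ_f\,\dd m_{\susp}=\frac1{\overline{r}}\int_\Sigma\Phi_f\,\dd m_\Sigma=\frac3\pi\int_{\pi/3}^{2\pi/3}f(e^{it})\,\dd t$ from an explicit kernel identity and then invoking Kaneko--Zagier.
\end{itemize}
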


For the different ordering by the discriminant of the associated indefinite binary quadratic form, Duke--Friedlander--Iwaniec~\cite{DFI2012} and Masri~\cite{Masri2012} showed that the average of $\val(\gamma)$ over classes of fundamental discriminant $D$ tends to $720$ as $D\to\infty$. This first-moment result permits cancellation of imaginary parts, whereas our length-ordered theorem describes the distribution of the individual values.

Kaneko and Shigeki~\cite{KanekoShigeki2010} made a further intriguing observation concerning the imaginary parts. To describe it, we recall the following symmetries. For $\gamma = \smat{a&b\\c&d} \in \Gamma$ and $\gamma^\ast \coloneqq \smat{a&-b\\-c&d}$, Kaneko~\cite{Kaneko2009} proved
\begin{align}\label{eq:Kaneko-symmetry}
	\val(\gamma^{-1}) = \val(\gamma), \quad \val(\gamma^\ast) = \overline{\val(\gamma)}.
\end{align}
Hence $\val(\gamma)$ is real whenever $\gamma$ is conjugate in $\Gamma$ to either $\gamma^\ast$ or $(\gamma^\ast)^{-1}$. 

\begin{conjecture}[Kaneko--Shigeki]\label{conj:Kaneko-Shigeki}
	For every hyperbolic element $\gamma\in\Gamma$, the value $\val(\gamma)$ is real if and only if $\gamma$ is conjugate in $\Gamma$ to either $\gamma^\ast$ or $(\gamma^\ast)^{-1}$.
\end{conjecture}

No numerical counterexample to \cref{conj:Kaneko-Shigeki} was found among the primitive classes with $\ell(C_\gamma)\leq13$ used in the computation of \cref{fig:Imval-distr}. Kaneko's original formulation is given in terms of real quadratic irrationalities rather than hyperbolic elements. We verify in~\cref{section:proof-2} that it is equivalent to the formulation above. Numerically, once the classes satisfying this symmetry condition are removed, the sharp spike at $0$ in the distribution of $\Im\val(\gamma)$ disappears, as shown in~\cref{fig:Imval-distr}.

\begin{figure}[H]
	\centering
	\includegraphics[width=10cm]{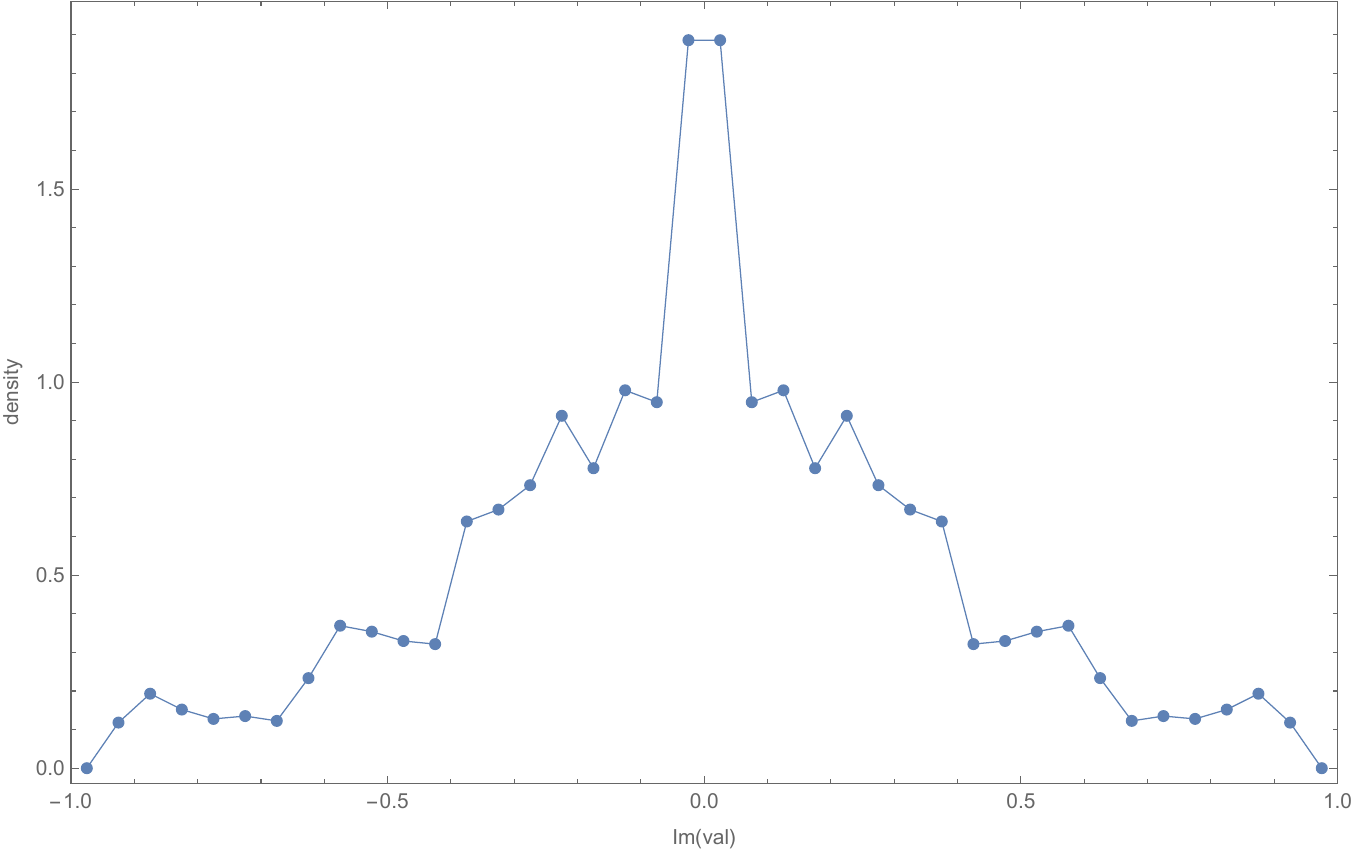}
	\caption{Distribution of $\Im\val(\gamma)$ for conjugacy classes of primitive hyperbolic elements $\gamma\in\Gamma$ satisfying $\ell(C_\gamma)\leq 13$, after excluding approximately $26.5\%$ of the classes, namely those for which $\gamma$ is conjugate in $\Gamma$ to either $\gamma^\ast$ or $(\gamma^\ast)^{-1}$. The interval $(-1,1)$ is divided into $40$ equal subintervals, and the resulting counts are normalized so that the total area is $1$.}
	\label{fig:Imval-distr}
\end{figure}

We do not resolve~\cref{conj:Kaneko-Shigeki}. Nevertheless, assuming it, the concentration at $720$ persists even after all real values are removed. To state this precisely, let
\[
	\Pi_{\mathrm{nr}}(L) \coloneqq \{[\gamma] \in \Pi(L) : \Im\val(\gamma) \neq 0 \}.
\]

\begin{theorem}\label{thm:main2}
	Assume \cref{conj:Kaneko-Shigeki}. Then, for every $\varepsilon>0$,
\[
	\lim_{L \to\infty}
	\frac{\#\left\{[\gamma] \in \Pi_{\mathrm{nr}}(L) : |\val(\gamma)-720|<\varepsilon \right\}}{\#\Pi_{\mathrm{nr}}(L)} = 1.
\]
\end{theorem}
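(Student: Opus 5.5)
Granting \cref{thm:main1} for $f=j$, the plan is to show that the classes removed in passing from $\Pi(L)$ to $\Pi_{\mathrm{nr}}(L)$ are too few to affect the limit: if $\#\Pi_{\mathrm{nr}}(L)\ge c\,\#\Pi(L)$ for some $c>0$ and all large $L$, then
\[
	\frac{\#\{[\gamma]\in\Pi_{\mathrm{nr}}(L): |\val(\gamma)-720|\ge\varepsilon\}}{\#\Pi_{\mathrm{nr}}(L)}
	\le \frac{1}{c}\cdot\frac{\#\{[\gamma]\in\Pi(L): |\val(\gamma)-720|\ge\varepsilon\}}{\#\Pi(L)}\longrightarrow 0.
\]
So it suffices to show that the complement $\Pi(L)\setminus\Pi_{\mathrm{nr}}(L)$ has density bounded away from $1$; in fact we aim to show it has density $0$. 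Assuming \cref{conj:Kaneko-Shigeki}, this complement consists exactly of the classes $[\gamma]$ with $\gamma$ conjugate to $\gamma^\ast$ or to $(\gamma^\ast)^{-1}$. Hence the whole task reduces to an unconditional counting statement: these ``reciprocal/ambiguous-type'' classes are $o(\#\Pi(L))$.

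To count them, we encode $[\gamma]$ by the minimal period $(a_1,\dots,a_{2n})$ (up to cyclic shift) of the even-length regular continued fraction of a reduced fixed point, as in the standard Gauss reduction. In this encoding $\gamma\mapsto\gamma^{-1}$ reverses the cycle, and $\gamma\mapsto\gamma^\ast$ (conjugation by $\smat{1&0\\0&-1}$) also acts by reversal combined with a shift by one. So the condition that $\gamma$ be conjugate to $\gamma^\ast$ or $(\gamma^\ast)^{-1}$ translates into the period being invariant, up to cyclic rotation, under one of two explicit involutions. A word of length $2n$ that is invariant under a reflection-type symmetry of the dihedral group is determined by roughly half of its letters.

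The length is $\ell(C_\gamma)=2\log\varepsilon_\gamma$, where $\varepsilon_\gamma$, the larger eigenvalue, is essentially the size of the continuant $q(a_1,\dots,a_{2n})$. The number of symmetric cycles with $\ell\le L$ is therefore governed by the number of half-words with continuant $\lesssim e^{L/4}$, up to polynomial factors for the choice of axis and rotation. Continuant counting gives $\#\{\text{words}: q\le X\}\ll X^2$, which is the same estimate underlying $\#\Pi(L)\asymp e^L/L$. Thus the symmetric classes number $\ll L^{O(1)}e^{L/2}$.

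An equivalent route is to note that such classes correspond to $\gamma$ lying in a conjugate of $\gamma^{\ast}$-type reflection groups, i.e.\ to geodesics fixed by a reflection of the modular orbifold. Their count is known (Sarnak's reciprocal geodesics; Basmajian--Suzzi Valli) to be $\asymp e^{L/2}$, which is $o(e^L/L)$. Either way the excluded proportion tends to $0$, and the display above concludes the proof.

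The main obstacle is the bookkeeping in the second step: correctly identifying how $\ast$ and inversion act on the continued-fraction cycle, including even versus odd minimal period and non-primitive shifts, and checking that symmetric words are genuinely determined by half their length uniformly enough to give the exponential saving. The first thing to try is to cite the reciprocal-geodesic count directly and verify that both symmetry conditions fall under it.
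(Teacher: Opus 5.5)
Your proposal is correct and is essentially the paper's proof: under Kaneko--Shigeki's conjecture the real-valued classes are fixed by $\phi_W$ or $\phi_A$, Sarnak's counts $N_W(X)\sim X/(2\log X)$ and $N_A(X)\ll X(\log X)^2$ at $X=2\cosh(L/2)$ make them $o(\#\Pi(L))$, and the same ratio inequality then gives the result. There are two slips, neither affecting the conclusion: the needed counts are these $\phi_W$- and $\phi_A$-counts, not the reciprocal ($\phi_R$-fixed) ones, and they have sizes about $e^{L/2}/L$ and $L^2e^{L/2}$ rather than $\asymp e^{L/2}$; and on the continued-fraction cycle $\gamma\mapsto\gamma^\ast$ is a pure shift by one (so $[\gamma]=[\gamma^\ast]$ means odd minimal period, i.e.\ the cycle is a square), while it is $(\gamma^\ast)^{-1}$ that acts by a reversal, so in your direct count one case is a rotation symmetry rather than a reflection symmetry, though it still fixes half the letters.
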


In \cref{section:suspension}, we construct the continued-fraction suspension, prove its ergodicity, and identify its primitive closed orbits with primitive hyperbolic classes, preserving length. A key input for the proof of \cref{thm:main1} is the suspension equidistribution theorem (\cref{thm:pollicott-equidistribution}), which we state in \cref{section:BHI-suspension-observable} in a form following Pollicott's argument. In the same section, using the decomposition formula of Bengoechea--Imamo\={g}lu, we construct a bounded continuous observable on the suspension whose averages along primitive closed orbits recover $\val_f$, and whose space average is $\alpha_f$. These ingredients prove \cref{thm:main1} in \cref{section:proof-1}. Since \cref{thm:pollicott-equidistribution} is not stated in precisely this form in Pollicott's paper, we include an independent proof in \cref{sec:Proof-Pollicott}. Finally, \cref{section:proof-2} compares the Kaneko--Shigeki formulations and proves \cref{thm:main2} using Sarnak's counts.

\section{A suspension model for the modular geodesic flow}\label{section:suspension}

The purpose of this section is to construct a continued-fraction suspension
\[
	(X_{\susp},m_{\susp},(\psi_s)_{s\in\R}),
\]
where $m_{\susp}$ is a probability measure and $(\psi_s)_{s\in\R}$ is an ergodic measure-preserving flow. The construction uses the continued-fraction coordinates of Adler--Flatto~\cite{AdlerFlatto1984} and Series~\cite{Series1985} and models the geodesic flow on $X_{\mathrm{mod}}\coloneqq \Gamma \backslash \PSL_2(\R)$ with normalized Haar measure. Under $\PSL_2(\R)\cong T^1\bbH$, the normalized Haar measure corresponds to the Liouville measure, and the right diagonal flow is the geodesic flow. Its closed orbits, determined by hyperbolic conjugacy classes, are the modular knots of Ghys~\cite{Ghys2007}. We use Einsiedler--Ward~\cite[Chapters~3 and~9]{EinsiedlerWard2011} for the ergodic-theoretic input.

Every topological space below carries its Borel $\sigma$-algebra. An invertible measure-preserving transformation $S\colon(X,m)\to(X,m)$ is \emph{ergodic} if every measurable $A\subset X$ satisfying
\[
	m\bigl(S^{-1}(A)\mathbin{\triangle}A\bigr)=0
\]
has $m(A)=0$ or $1$, where $A\mathbin{\triangle}B\coloneqq(A\setminus B)\cup(B\setminus A)$. A \emph{flow} is a continuous $\R$-action $(\phi_s)_{s\in\R}$, and it is \emph{measure-preserving} if every $\phi_s$ preserves the given Borel probability measure $m$. Following Einsiedler--Ward~\cite[Chapter~8]{EinsiedlerWard2011}, such a flow is \emph{ergodic} if every measurable $A\subset X$ satisfying
\[
	m\bigl(\phi_s^{-1}(A) \mathbin{\triangle} A\bigr) = 0 \quad \text{for every } s \in \R
\]
has $m(A)=0$ or $1$.

\subsection{Natural extension of the Gauss map}

Let $I\coloneqq(0,1)\setminus\Q$. Every $x\in I$ has a unique continued-fraction expansion
\[
	x= [a_1,a_2,\ldots] \coloneqq \cfrac{1}{a_1+\cfrac{1}{a_2+\ddots}}.
\]
Define the Gauss map $T\colon I\to I$ by
\[
	T(x)\coloneqq\frac1x-\left\lfloor\frac1x\right\rfloor =[a_2,a_3,\ldots].
\]
Put $\widehat{I} \coloneqq I\times I$. For $(x,y)=\bigl([a_1,a_2,\ldots],[a_0,a_{-1},\ldots]\bigr)\in\widehat{I}$, define its \emph{natural extension} $\widehat{T}\colon\widehat{I} \to \widehat{I}$ by
\[
	\widehat{T}(x,y)
	=\bigl([a_2,a_3,\ldots],[a_1,a_0,a_{-1},\ldots]\bigr)
	=\left(Tx,\frac1{\lfloor1/x\rfloor+y}\right).
\]

\begin{lemma}\label{lem:natural-extension}
	The map $\widehat{T}$ is a measure-preserving homeomorphism of $\widehat{I}$ with respect to the probability measure $\widehat{\mu}$ defined by
	\[
		\widehat{\mu}(A) \coloneqq \frac{1}{\log 2} \int_A \frac{1}{(1+xy)^2}\,\dd x\,\dd y
	\]
	for every Borel set $A \subset \widehat{I}$.
\end{lemma}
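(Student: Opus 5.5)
We verify three things in turn: that $\widehat{T}$ is a bijection of $\widehat{I}$ with explicit inverse, that $\widehat{T}$ and its inverse are continuous, and that $\widehat{\mu}$ is an invariant probability measure.

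\emph{Bijection.} Write $(x,y)=([a_1,a_2,\ldots],[a_0,a_{-1},\ldots])$. The image $\widehat{T}(x,y)=([a_2,\ldots],[a_1,a_0,\ldots])$ is obtained by shifting the bi-infinite sequence $(\ldots,a_{-1},a_0\mid a_1,a_2,\ldots)$ one step to the left. Every pair of irrationals in $I$ corresponds uniquely to such a sequence of positive integers. Hence $\widehat{T}$ is a bijection, and its inverse is the right shift:
\[
	\widehat{T}^{-1}(u,v)=\left(\frac{1}{\lfloor 1/v\rfloor+u},\,Tv\right).
\]

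\emph{Continuity.} On each open set $\{x\in I:\lfloor 1/x\rfloor=n\}=I\cap(1/(n+1),1/n)$, the map $\widehat{T}$ is the rational map $(x,y)\mapsto(1/x-n,\,1/(n+y))$, which is continuous. These sets form an open cover of $\widehat{I}$, because the endpoints $1/n$ are rational and so excluded from $I$. The same argument applies to $\widehat{T}^{-1}$ with the roles of the coordinates swapped. So $\widehat{T}$ is a homeomorphism.

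\emph{Normalization.} We compute the total mass directly:
\[
	\int_0^1\!\!\int_0^1\frac{\dd y\,\dd x}{(1+xy)^2}=\int_0^1\frac{\dd x}{1+x}=\log 2,
\]
so $\widehat{\mu}$ is a probability measure. The rationals in each coordinate form a null set and can be ignored throughout.

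\emph{Invariance.} Since $\widehat{T}$ is a bijection, it suffices to show $\widehat{\mu}(\widehat{T}(A))=\widehat{\mu}(A)$ for all Borel $A$, and we may restrict to $A$ inside a single strip $\{\lfloor 1/x\rfloor=n\}$. There we change variables $u=1/x-n$ and $v=1/(n+y)$. Then $x=1/(n+u)$ and $y=1/v-n$, and
\[
	1+xy=\frac{v(n+u)+1-nv}{v(n+u)}=\frac{1+uv}{v(n+u)}.
\]
The Jacobian is
\[
	\left|\frac{\partial(u,v)}{\partial(x,y)}\right|=\frac{1}{x^2}\cdot\frac{1}{(n+y)^2}=(n+u)^2v^2 .
\]
Therefore
\[
	\frac{\dd u\,\dd v}{(1+uv)^2}=\frac{(n+u)^2v^2\,\dd x\,\dd y}{(1+uv)^2}=\frac{\dd x\,\dd y}{(1+xy)^2},
\]
which gives invariance on each strip. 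Summing over $n$ then gives invariance on all of $\widehat{I}$.

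The main point needing care is the change of variables: in particular, that $\widehat{T}$ maps each strip diffeomorphically onto $I\times\bigl(I\cap(1/(n+1),1/n)\bigr)$. This holds because $v=1/(n+y)$ ranges over exactly that interval as $y$ ranges over $I$.
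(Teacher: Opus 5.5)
Your proof is correct and follows essentially the same route as the paper: split $\widehat{I}$ into the branches $I_a\times I$, note that $\widehat{T}$ maps each branch homeomorphically onto $I\times I_a$ with an explicit inverse, check that the branch change of variables preserves the density $(1+xy)^{-2}$, and then sum over $a$. The only differences are cosmetic. You obtain bijectivity from the two-sided shift on digit sequences, and you verify $\widehat{\mu}(\widehat{T}A)=\widehat{\mu}(A)$ instead of $\widehat{\mu}(\widehat{T}^{-1}A)=\widehat{\mu}(A)$; these are equivalent because $\widehat{T}$ is a homeomorphism.
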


\begin{proof}
	For $a\ge1$, put
	\begin{align}\label{eq:def-Ia}
		I_a\coloneqq \left(\frac{1}{a+1},\frac{1}{a}\right)\cap I.
	\end{align}
	The open sets $I_a$ partition $I$, and $\widehat{T}$ maps $I_a\times I$ bijectively onto $I\times I_a$. On this branch, writing $(x',y')=\widehat{T}(x,y)$, we have
	\[
		(x',y')=\left(\frac1x-a,\frac1{a+y}\right),
		\qquad
		\widehat{T}^{-1}(x',y')=\left(\frac1{a+x'},\frac1{y'}-a\right),
		\qquad
		\frac{\dd x'\,\dd y'}{(1+x'y')^2}=\frac{\dd x\,\dd y}{(1+xy)^2}.
	\]
	The first two formulas show that $\widehat{T}$ is a homeomorphism, and the last one and change of variables give
	\begin{align*}
		\widehat{\mu}(\widehat{T}^{-1}A) &= \frac{1}{\log 2} \sum_{a\ge1} \int_{\widehat{T}^{-1}A\cap(I_a\times I)} \frac{1}{(1+xy)^2}\,\dd x\,\dd y \\
			&= \frac{1}{\log 2} \sum_{a\ge1} \int_{A\cap(I\times I_a)} \frac{1}{(1+x'y')^2}\,\dd x'\,\dd y' = \widehat{\mu}(A).
	\end{align*}
	Finally,
	\[
		\widehat{\mu}(\widehat{I}) = \frac{1}{\log 2} \int_0^1\int_0^1 \frac{1}{(1+xy)^2}\,\dd x\,\dd y = \frac{1}{\log 2} \int_0^1\frac{1}{1+x}\,\dd x =1
	\]
	proves the normalization.
\end{proof}

\subsection{Parity extension}

Adjoin a parity coordinate and equip $\Z/2\Z=\{0,1\}$ with the discrete topology
and the normalized counting measure $m_{\Z/2\Z}$. Set
\[
	\Sigma\coloneqq\widehat{I}\times\Z/2\Z, \qquad
	m_\Sigma\coloneqq\widehat{\mu}\otimes m_{\Z/2\Z}, \qquad
	\sigma(x,y,\epsilon)\coloneqq \bigl(\widehat{T}(x,y),\epsilon+1\bigr).
\]
By \cref{lem:natural-extension}, $\sigma$ is a measure-preserving homeomorphism of $\Sigma$.

\begin{lemma}\label{lem:sigma-ergodic}
	The transformation $\sigma\colon(\Sigma,m_\Sigma)\to(\Sigma,m_\Sigma)$ is ergodic.
\end{lemma}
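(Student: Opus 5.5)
The plan is to reduce the ergodicity of the skew product $\sigma=\widehat{T}\times(\epsilon\mapsto\epsilon+1)$ to the ergodicity of $\widehat{T}^2$, and to obtain the latter from the mixing of $\widehat{T}$.

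\emph{Reduction.} Let $A\subset\Sigma$ be measurable with $m_\Sigma(\sigma^{-1}A\mathbin{\triangle}A)=0$, and write $A=(A_0\times\{0\})\cup(A_1\times\{1\})$ with $A_0,A_1\subset\widehat{I}$. Since $\sigma^{-1}(B\times\{\epsilon\})=\widehat{T}^{-1}B\times\{\epsilon-1\}$, invariance means
\[
\widehat{\mu}\bigl(\widehat{T}^{-1}A_1\mathbin{\triangle}A_0\bigr)=0, \qquad \widehat{\mu}\bigl(\widehat{T}^{-1}A_0\mathbin{\triangle}A_1\bigr)=0.
\]
Composing these and using that $\widehat{T}$ preserves $\widehat{\mu}$ (\cref{lem:natural-extension}) gives $\widehat{\mu}(\widehat{T}^{-2}A_0\mathbin{\triangle}A_0)=0$. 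If $\widehat{T}^2$ is ergodic, then $\widehat{\mu}(A_0)\in\{0,1\}$. Moreover $\widehat{\mu}(A_1)=\widehat{\mu}(\widehat{T}^{-1}A_0)=\widehat{\mu}(A_0)$, hence
\[
m_\Sigma(A)=\tfrac12\bigl(\widehat{\mu}(A_0)+\widehat{\mu}(A_1)\bigr)\in\{0,1\}.
\]

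\emph{Ergodicity of $\widehat{T}^2$.} It suffices to show that $\widehat{T}$ is mixing, since mixing passes to every power and mixing implies ergodicity.

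\begin{itemize}
\item I take as known that the Gauss map $T$ is mixing with respect to the Gauss measure $\frac{1}{\log 2}\frac{\dd x}{1+x}$; this is proved in Einsiedler--Ward, Chapter~3. The Gauss measure is the $x$-marginal of $\widehat{\mu}$.
\item Let $\mathcal{C}$ be the algebra of two-sided cylinder sets $\{a_{-k+1}=b_{-k+1},\dots,a_k=b_k\}$ for $k\ge 1$. It generates the Borel $\sigma$-algebra of $\widehat{I}$, because the continued-fraction digits are continuous coordinates that separate points.
\item By a standard approximation argument, mixing of $\widehat{T}$ then reduces to checking $\widehat{\mu}(\widehat{T}^{-n}C\cap D)\to\widehat{\mu}(C)\widehat{\mu}(D)$ for $C,D\in\mathcal{C}$.
\item For such $C,D$, choose $k$ so that $C'=\widehat{T}^{-k}C$ and $D'=\widehat{T}^{-k}D$ depend only on the digits $a_1,a_2,\dots$, that is, they are of the form $B\times I$.
\item Invariance gives $\widehat{\mu}(\widehat{T}^{-n}C\cap D)=\widehat{\mu}(\widehat{T}^{-n}C'\cap D')$.
\item For sets of the form $B\times I$ we have $\widehat{T}^{-n}(B\times I)=T^{-n}B\times I$, and $\widehat{\mu}$ projects to the Gauss measure. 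So the correlation equals the Gauss-measure correlation of $T^{-n}B_C$ and $B_D$, which tends to the product of the measures by mixing of $T$.
\end{itemize}

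The main obstacle is this last step: lifting mixing from $T$ to its natural extension $\widehat{T}$. The points needing care are that the cylinders in both coordinates generate, and that the $\widehat{T}^{k}$-shift turns past coordinates into future ones. If citing exactness of $T$ is preferable, one can use instead that the natural extension of an exact endomorphism is a $K$-automorphism, hence mixing. Either route gives ergodicity of $\widehat{T}^2$, and the reduction above then completes the proof.
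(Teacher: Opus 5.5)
Your argument is correct, but it takes a different route from the paper.

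\textbf{The paper's route.} The paper does not analyze $\sigma$ directly. It introduces the coordinate change $\kappa(x,y,\epsilon)=(x,y/(1+xy),\epsilon)$ and checks two identities: $\kappa_*m_\Sigma=m_{\overline{Y}}\otimes m_{\Z/2\Z}$, and $\kappa\circ\sigma=\widetilde{T}\circ\kappa$, where $\widetilde{T}$ is Einsiedler--Ward's parity extension of their invertible extension $\overline{T}$. It then imports the ergodicity of $\widetilde{T}$ from the proof of Einsiedler--Ward's Proposition~9.25. There that ergodicity is deduced from Hopf's ergodicity theorem for the modular geodesic flow, via the continued-fraction suspension.

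\textbf{Your route.} You stay on $\Sigma$. You remove the parity coordinate by reducing to ergodicity of $\widehat{T}^2$, which amounts to $-1$ not being an eigenvalue of $\widehat{T}$. You get this from mixing of $\widehat{T}$, which you lift from mixing of the Gauss map by shifting two-sided cylinders into one-sided ones. The individual steps check out: $\sigma^{-1}(B\times\{\epsilon\})=\widehat{T}^{-1}B\times\{\epsilon-1\}$, $\widehat{T}^{-n}(B\times I)=T^{-n}B\times I$, and the $x$-marginal of $\widehat{\mu}$ is the Gauss measure.

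\textbf{What each buys.} Your route is independent of the geodesic flow and of the Einsiedler--Ward coordinates. It needs only a one-dimensional input, and it yields the stronger fact that $\widehat{T}$ is mixing. The paper's route is shorter on the page and makes the link to Hopf's theorem explicit, as in its subsequent remark. The price is that it relies on the proof, rather than the statement, of a cited proposition.

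\textbf{Two points of precision.} First, two-sided cylinders with $\N$-valued digits form a $\pi$-system, not an algebra, because the complement of a cylinder is a countably infinite union of cylinders. Your ``standard approximation'' is therefore best run through Dynkin's $\pi$--$\lambda$ theorem. For fixed $D$, the class of $A$ with $\widehat{\mu}(\widehat{T}^{-n}A\cap D)\to\widehat{\mu}(A)\widehat{\mu}(D)$ contains $\widehat{I}$ and is closed under complements, disjoint unions and limits in measure. Second, make sure your source for the Gauss map actually states mixing, since Knopp-lemma proofs often give only ergodicity. The same bounded-distortion estimate does give exactness of $T$, and exactness implies mixing.
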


\begin{proof}
	Let
	\[
		\overline{Y} \coloneqq \left\{(x,z)\in[0,1)^2: 0\leq z \leq \frac{1}{1+x}\right\},
	\]
	and equip $\overline{Y}$ with the probability measure $\dd m_{\overline{Y}} \coloneqq \dd x\,\dd z/\log 2$. Einsiedler--Ward~\cite[Proposition~3.15]{EinsiedlerWard2011} consider another invertible extension $\overline{T} \colon \overline{Y} \to \overline{Y}$ of the Gauss map given, modulo null boundary sets, by $\overline{T}(x,z) \coloneqq (Tx,x(1-xz))$. Einsiedler--Ward use the parity set $\{\pm1\}$, and under the bijection $\epsilon\mapsto(-1)^\epsilon$ their sign change corresponds to $\epsilon\mapsto\epsilon+1$ in $\Z/2\Z$. Thus their parity extension is the transformation $\widetilde{T} \colon \overline{Y} \times \Z/2\Z \to \overline{Y} \times \Z/2\Z$ given by
	\[
		\widetilde{T}(x,z,\epsilon) \coloneqq \bigl(\overline{T}(x,z),\epsilon+1\bigr).
	\]
	The proof of Einsiedler--Ward~\cite[Proposition~9.25]{EinsiedlerWard2011} shows that $\widetilde T$ is ergodic with respect to $m_{\overline{Y}}\otimes m_{\Z/2\Z}$.

	Consider the coordinate change $\kappa \colon \Sigma \to \overline{Y} \times \Z/2\Z$ defined by
	\[
		\kappa(x,y,\epsilon) \coloneqq \left(x,\frac{y}{1+xy},\epsilon\right).
	\]
	Put
	\[
		Y_0 \coloneqq \kappa(\Sigma) = \left\{(x,z,\epsilon)\in\overline{Y}\times\Z/2\Z: x\in I,\ 0<z<\frac{1}{1+x},\ \frac{z}{1-xz}\in I\right\}.
	\]
	This description shows that $Y_0$ is Borel. Since
	\[
		\kappa^{-1}(x,z,\epsilon) = \left(x,\frac{z}{1-xz},\epsilon\right),
	\]
	the map $\kappa \colon \Sigma \to Y_0$ is a bimeasurable bijection. A direct calculation shows that $\kappa_*m_\Sigma = m_{\overline{Y}} \otimes m_{\Z/2\Z}$ and $\kappa \circ \sigma = \widetilde{T} \circ \kappa$ on $\Sigma$. The first identity shows that $Y_0$ is conull, and the second that it is invariant. Hence the two systems are isomorphic in the sense of Einsiedler--Ward~\cite[Definition~2.7(2)]{EinsiedlerWard2011}. Since $\widetilde T$ is ergodic, so is $\sigma$.
\end{proof}

\subsection{Suspension flow}

We now construct the suspension flow over $(\Sigma,m_\Sigma,\sigma)$. For $\omega = (x,y,\epsilon) \in \Sigma$, write $(x',y') = \widehat{T}(x,y)$, and define the \emph{roof function} $r \colon \Sigma \to \R_{>0}$ by
\[
	r(x,y,\epsilon) \coloneqq -\frac{1}{2} \log(xyx'y').
\]
We note that $r$ is independent of the parity coordinate $\epsilon$.

\begin{lemma}\label{lem:roof-function}
	The roof function $r$ is continuous and satisfies
	\[
		r(\omega) > \log 2
	\]
	for every $\omega \in \Sigma$. Moreover, $r\in L^1(\Sigma, m_\Sigma)$ and
	\[
		\overline{r} \coloneqq \int_\Sigma r\,\dd m_\Sigma = \frac{\pi^2}{6\log 2}.
	\]
	If $\omega = (x,y,\epsilon) \in \Sigma$ and $a = \lfloor 1/x \rfloor \geq 2$, then $r(\omega)\geq\log a$.
\end{lemma}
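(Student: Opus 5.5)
The plan is to rewrite the roof function in terms of the single variable $x$ and the partial quotient $a=\lfloor 1/x\rfloor$. On the branch $I_a\times I$ we have $x'=1/x-a$ and $y'=1/(a+y)$. Since $1/x=a+x'$, this gives $x=1/(a+x')$, so
\[
	xyx'y'=\frac{x'}{a+x'}\cdot\frac{y}{a+y}.
\]
Hence
\[
	r(\omega)=\frac12\log\frac{a+x'}{x'}+\frac12\log\frac{a+y}{y}.
\]
Continuity is then immediate. The maps $(x,y)\mapsto(x',y')$ are continuous on each open set $I_a\times I$, and these sets partition $\widehat I$ into relatively open pieces. Moreover $x,y,x',y'\in(0,1)$, so $r>0$ is well defined.

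For the lower bounds, note that $t\mapsto (a+t)/t=1+a/t$ is decreasing on $(0,1)$. So for $t\in I$ we get $(a+t)/t>1+a$, which gives $r(\omega)>\log(1+a)$. For $a\ge1$ this yields $r>\log 2$. For $a\ge2$ it yields $r>\log(a+1)\ge\log a$, which is stronger than the final claim.

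For integrability and the mean, I would use $r\ge0$ and invariance of $m_\Sigma$. Write $r=-\tfrac12(\log x+\log y)-\tfrac12(\log x'+\log y')$. By \cref{lem:natural-extension}, $\int g\circ\widehat T\,\dd\widehat\mu=\int g\,\dd\widehat\mu$ for nonnegative $g$. Applying this to $g=-\log x-\log y\ge0$ gives
\[
	\int r\,\dd m_\Sigma=\int(-\log x-\log y)\,\dd\widehat\mu=-2\int_I\log x\,\dd\widehat\mu,
\]
where the last step uses the symmetry of the density in $x$ and $y$.

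The $x$-marginal of $\widehat\mu$ is the Gauss measure $\dd x/((1+x)\log 2)$, because $\int_0^1(1+xy)^{-2}\dd y=1/(1+x)$. So the mean equals
\[
	-\frac{2}{\log2}\int_0^1\frac{\log x}{1+x}\,\dd x.
\]
Expanding $1/(1+x)=\sum(-1)^n x^n$ and using $\int_0^1x^n\log x\,\dd x=-1/(n+1)^2$, this becomes
\[
	\frac{2}{\log 2}\sum_{n\geq0}\frac{(-1)^n}{(n+1)^2}=\frac{2}{\log2}\cdot\frac{\pi^2}{12}=\frac{\pi^2}{6\log 2}.
\]
Finiteness of this value gives $r\in L^1$.

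The only delicate point is justifying the split of $r$ into two separately integrable nonnegative pieces. Each piece is nonnegative and $\widehat\mu$-integrable, since $\log x\in L^1$ of Gauss measure. Tonelli and invariance therefore apply without sign issues, and the remaining computations are routine.
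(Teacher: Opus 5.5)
Your proof is correct. The continuity and lower-bound part matches the paper's argument; the paper only notes that each factor exceeds $2$, resp.\ $a$, whereas your bound $1+a$ gives the slightly sharper $r>\log(1+a)$.

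For the mean you take a different route. The paper proves the pointwise coboundary identity \eqref{eq:roof-coboundary},
$r=-2\log x+H\circ\widehat T-H$ with $H=-\frac12\log x+\frac12\log y-\log(1+xy)$. This rests on the auxiliary identity $y'/x=(1+x'y')/(1+xy)$, and it requires checking $H\in L^1$ before invariance can be applied to a signed function. You instead use the splitting $r=\frac12 g+\frac12 g\circ\widehat T$ with $g=-\log(xy)\geq 0$. Invariance and Tonelli then apply with no sign or integrability issues. Your reduction to $-2\int\log x\,\dd\widehat\mu$ comes from the $x\leftrightarrow y$ symmetry of the density rather than from a cohomology relation.

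For the lemma as stated, your argument is shorter and cleaner. What the paper's route buys is the identity \eqref{eq:roof-coboundary} itself, which is reused immediately afterwards. Telescoping it along a periodic $\sigma$-orbit gives \eqref{eq:periodic-roof-sum}, $\sum_j r(\sigma^j\omega)=-2\sum_j\log x_j$. That is the form needed to match the Bengoechea--Herrero--Imamo\={g}lu length formula in \cref{lem:length-preserving} and Ustinov's $\varrho(w)$ in the appendix. Your decomposition only gives $-\sum_j\log x_j-\sum_j\log y_j$ along a periodic orbit. Identifying this with $-2\sum_j\log x_j$ would need a separate argument, namely $\prod_j x_j=\prod_j y_j$ over a period, which is exactly what the paper's telescoping of $\log(1+x_jy_j)$ provides.
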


\begin{proof}
	By definition, if $x=1/(a+x')$, then $y'=1/(a+y)$ for some $a\ge1$. Hence 
	\[ 
		xyx'y' = \frac{x'}{a+x'}\frac{y}{a+y},
	\] 
	and therefore
	\[
		r(x,y,\epsilon) = \frac{1}{2} \log \left(\frac{a+x'}{x'} \frac{a+y}{y}\right).
	\]
	Since $a \geq 1$ and $0< x',y <1$, each of the two factors inside the logarithm is greater than $2$, and hence $r(x,y,\epsilon) > \log 2$. For each $a\geq1$, the set $I_a$ defined in \eqref{eq:def-Ia} is open in $I$, and on $I_a \times I \times \Z/2\Z$ the integer $a$ is constant and $x'=1/x-a$ is continuous. The formula above therefore shows that $r$ is continuous on each of these open sets, and hence on all of $\Sigma$. Moreover, $0<x',y<1$ makes each factor greater than $a$. Hence $r(x,y,\epsilon)>\log a$, which gives the stated inequality when $a\geq2$.
	
	It remains to prove integrability and compute the mean of $r$. Define 
	\[
		H(x,y) \coloneqq -\frac{1}{2}\log x +\frac{1}{2}\log y -\log(1+xy).
	\]
	If $(x',y')=\widehat{T}(x,y)$, then
	\[
		\frac{y'}{x} = \frac{1+x'y'}{1+xy},
	\]
	and hence
	\[
		\log(1+xy)-\log(1+x'y') = \log x-\log y'.
	\]
	It follows that
	\begin{align}\label{eq:roof-coboundary} 
		r(x,y,\epsilon) = -2\log x + H(x',y') - H(x,y). 
	\end{align} 
	We have 
	\[ 
		\int_{\widehat{I}}|\log x|\,\dd\widehat{\mu} = \frac{1}{\log 2} \int_0^1\frac{-\log x}{1+x}\,\dd x <\infty,
	\]
	and, by symmetry, $\log x, \log y \in L^1(\widehat{I}, \widehat{\mu})$. Since $\log(1+xy)$ is bounded on $\widehat{I}$, it follows that $H \in L^1(\widehat{I}, \widehat{\mu})$. Hence \eqref{eq:roof-coboundary} and the $\widehat{T}$-invariance of $\widehat{\mu}$ imply that $r \in L^1(\Sigma,m_\Sigma)$ and 
	\begin{align*} 
		\overline{r} &= -2\int_{\widehat{I}}\log x\,\dd\widehat{\mu} +\int_{\widehat{I}} \bigl(H\circ\widehat{T} - H\bigr)\,\dd\widehat{\mu}\\
			&= -\frac{2}{\log 2} \int_0^1\frac{\log x}{1+x}\,\dd x = \frac{\pi^2}{6\log 2}.
	\end{align*}
	Here the last equality follows by termwise integration of the geometric series for $(1+x)^{-1}$.
\end{proof}

\begin{definition}\label{def:m-susp}
	We define the \emph{suspension space} $X_{\susp}$ over $\sigma$ with roof function $r$ by
	\[
		X_{\susp} \coloneqq (\Sigma\times\R) \big/\sim,
	\]
	where the equivalence relation $\sim$ is generated by $(\omega, t + r(\omega)) \sim (\sigma \omega, t)$ for $\omega \in \Sigma$ and $t \in \R$. We equip $X_{\susp}$ with the quotient topology and write $[\omega,t]$ for the equivalence class of $(\omega,t)$. We define a probability measure $m_{\susp}$ on $X_{\susp}$ by
	\[
		m_{\susp}(A) \coloneqq \frac{1}{\overline{r}} \int_\Sigma \int_0^{r(\omega)} \mathbf{1}_A([\omega,t])\,\dd t\,\dd m_\Sigma(\omega)
	\]
	for every Borel set $A\subset X_{\susp}$. For $s \in \R$, define
	\[
		\psi_s([\omega,t]) \coloneqq [\omega,t+s].
	\]
\end{definition}

We next verify that $(\psi_s)_{s\in\R}$ is a measure-preserving flow and then establish its ergodicity.

\begin{lemma}\label{lem:suspension-flow}
	The family $(\psi_s)_{s\in\R}$ is a measure-preserving flow on $(X_{\susp},m_{\susp})$.
\end{lemma}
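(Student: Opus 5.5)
I will work with the fundamental domain $D\coloneqq\{(\omega,t)\in\Sigma\times\R: 0\le t<r(\omega)\}$ and first show that every class $[\omega,t]$ has a unique representative in $D$. Since $\sigma$ is a bijection and $r>\log 2>0$ by \cref{lem:roof-function}, the Birkhoff sums $S_n r(\omega)\coloneqq\sum_{k=0}^{n-1}r(\sigma^k\omega)$ for $n\ge0$, and $S_{-n}r(\omega)\coloneqq-\sum_{k=1}^{n}r(\sigma^{-k}\omega)$ for $n\geq 0$, are strictly increasing in $n\in\Z$. Because each term exceeds $\log 2$, they tend to $\pm\infty$. Hence for each $(\omega,t)$ there is a unique $n\in\Z$ with $S_nr(\omega)\le t<S_{n+1}r(\omega)$, and then $(\omega,t)\sim(\sigma^n\omega,t-S_nr(\omega))\in D$. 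Conversely, two points of $D$ equivalent under the generated relation must coincide, since the relation is exactly $(\omega,t)\sim(\sigma^n\omega,t-S_nr(\omega))$ for $n\in\Z$. This gives a bijection $\pi\colon D\to X_{\susp}$. Since $\sim$ is $\R$-translation invariant in the second coordinate, $\psi_s$ is well defined, and $\psi_0=\id$, $\psi_{s+s'}=\psi_s\circ\psi_{s'}$.

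For continuity of $(s,[\omega,t])\mapsto[\omega,t+s]$, I will use that the map $\R\times\Sigma\times\R\to\Sigma\times\R$, $(s,\omega,t)\mapsto(\omega,t+s)$, is continuous. I then need the quotient map $q\colon\Sigma\times\R\to X_{\susp}$ to satisfy that $\id_\R\times q$ is a quotient map. Either $q$ is open, or one applies Whitehead's lemma, since $\R$ is locally compact Hausdorff. Openness of $q$ is the cleanest route. The saturation of an open $U$ is $\bigcup_{n\in\Z}\Phi_n(U)$, where $\Phi_n(\omega,t)=(\sigma^n\omega,t-S_nr(\omega))$ is a homeomorphism of $\Sigma\times\R$, because $\sigma$ is a homeomorphism and $r$ is continuous (\cref{lem:roof-function}). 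So the saturation is open, and $q$ is open. Then $\id_\R\times q$ is open and surjective, hence a quotient map. The continuous map $(s,\omega,t)\mapsto q(\omega,t+s)$ factors through it, so the action is continuous.

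For measure preservation, I need $m_{\susp}(\psi_s^{-1}A)=m_{\susp}(A)$. It suffices to treat $0<s\le\log 2$, since negative $s$ follow by inverses and large $s$ by composition. Pulling back by $\pi$, $m_{\susp}$ is $\overline r^{-1}(m_\Sigma\otimes\mathrm{Leb})$ restricted to $D$. Measurability of $\pi^{-1}(A)$ follows because $\pi$ is continuous from $D$. For a small shift $s<\inf r$, the map $\pi^{-1}\psi_s\pi$ sends $(\omega,t)$ to $(\omega,t+s)$ when $t+s<r(\omega)$, and to $(\sigma\omega,t+s-r(\omega))$ otherwise. Split $D$ into these two pieces. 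The first is translation in $t$. The second maps $\{r(\omega)-s\le t<r(\omega)\}$ onto $\{(\sigma\omega,u):0\le u<s\}$. This preserves $m_\Sigma\otimes\mathrm{Leb}$ by Fubini and the $\sigma$-invariance of $m_\Sigma$, which follows from \cref{lem:natural-extension} together with invariance of the counting measure under $\epsilon\mapsto\epsilon+1$. The images of the two pieces tile $D$, which proves invariance. Finally, $m_{\susp}(X_{\susp})=1$ by the definition of $\overline r$.

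The main obstacle is the topological bookkeeping: making sure that $q$ is open and that the Borel structure is handled correctly. Specifically, the definition of $m_{\susp}$ requires $(\omega,t)\mapsto\mathbf 1_A([\omega,t])$ to be measurable, and it is, since $q$ is continuous. The measure computation itself is routine.
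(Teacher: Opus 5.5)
Your proof is correct. The flow and continuity part matches the paper's. The saturation of an open $U$ is $\bigcup_{n\in\Z}\widetilde\sigma^{\,n}(U)$ for the homeomorphism $\widetilde\sigma(\omega,t)=(\sigma\omega,t-r(\omega))$. Hence the quotient map is open, $\id_\R\times\pi$ is a quotient map, and $(s,\omega,t)\mapsto[\omega,t+s]$ descends to a continuous action.

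You differ on measure preservation. The paper invokes Einsiedler--Ward's Lemma~9.23 for the special flow on $X_r=\{0\le t<r(\omega)\}$ and transports the invariant measure through the bijection $\Theta$. You instead verify invariance by hand with a cut-and-paste argument for small shifts: translation in $t$ on one piece, and $(\omega,t)\mapsto(\sigma\omega,t+s-r(\omega))$ on the other, using Fubini and $\sigma_*m_\Sigma=m_\Sigma$. Your version is self-contained. It also supplies the Birkhoff-sum argument for existence and uniqueness of representatives in the fundamental domain, which the paper only asserts. The paper's version is shorter but relies on the textbook lemma.

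One cosmetic point: $r>\log 2$ pointwise, but $\inf r=\log 2$ is not attained. The condition you actually use is $s\le r(\omega)$ for all $\omega$, which holds for $0<s\le\log 2$. So the phrase ``$s<\inf r$'' needlessly excludes $s=\log 2$ and is inconsistent with your earlier restriction; either choice works.
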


\begin{proof}
	We first verify that $(\psi_s)_{s\in\R}$ is a flow. Let $\pi \colon \Sigma \times \R \to X_{\susp}$ be the quotient map, and define $\widetilde{\sigma} \colon \Sigma \times \R \to \Sigma \times \R$ by
	\[
		\widetilde{\sigma}(\omega, t) \coloneqq (\sigma\omega, t-r(\omega)).
	\]
	Since $\sigma$ is a homeomorphism and $r$ is continuous by \cref{lem:roof-function}, the map $\widetilde{\sigma}$ is a homeomorphism. The equivalence classes defining $X_{\susp}$ are precisely the $\Z$-orbits of $\widetilde{\sigma}$. Hence, for every open set $U\subset\Sigma\times\R$,
	\[
		\pi^{-1}(\pi(U)) = \bigcup_{n\in\Z} \widetilde{\sigma}^n(U),
	\]
	which is open. By the definition of the quotient topology, $\pi(U)$ is open. Thus $\pi$ is an open quotient map.

	Consequently, $\mathrm{id}_{\R} \times \pi \colon \R \times \Sigma \times \R \to \R \times X_{\susp}$ is also an open quotient map. Define $\widetilde{\Psi} \colon \R \times \Sigma \times \R \to X_{\susp}$ by $(s,\omega,t)\mapsto[\omega,t+s]$. This map is continuous, being the composition of $\pi$ with the continuous map $(s, \omega, t) \mapsto (\omega, t+s)$. Since the equivalence relation defining $X_{\susp}$ is invariant under translation in the second coordinate, $\widetilde{\Psi}$ is constant on the fibers of $\id_{\R}\times\pi$. Hence, by the universal property of the quotient topology, it induces a continuous map $\Psi \colon \R \times X_{\susp} \to X_{\susp}$ satisfying $\Psi(s,[\omega,t]) = [\omega,t+s]$. Thus $\psi_s$ is well-defined and $(s,z) \mapsto \psi_s(z)$ is continuous. Moreover,
	\[
		\psi_0=\mathrm{id}_{X_{\susp}}, \qquad \psi_{s+t}=\psi_s\circ\psi_t
	\]
	for all $s,t\in\R$. Thus $(\psi_s)_{s\in\R}$ is a flow.
	
	It remains to prove that the flow is measure-preserving. We apply Einsiedler--Ward~\cite[Lemma~9.23]{EinsiedlerWard2011} to the special flow over $\sigma$ with roof function $r$. The required hypotheses are satisfied: the shift map $\sigma \colon (\Sigma, m_\Sigma) \to (\Sigma, m_\Sigma)$ is invertible and measure-preserving, while by \cref{lem:roof-function}, the roof function $r$ is positive, Borel measurable, and integrable. Put
	\[
		X_r\coloneqq\{(\omega,t)\in\Sigma\times\R:0\leq t<r(\omega)\}
	\]
	and define $\Theta\colon X_r\to X_{\susp}$ by $\Theta(\omega,t)\coloneqq[\omega,t]$. Every equivalence class has a unique representative in $X_r$, so $\Theta$ is a bijection. By
Einsiedler--Ward~\cite[Lemma~9.23]{EinsiedlerWard2011}, the finite suspension measure on $X_r$ is invariant under the special flow. By the definition of $m_{\susp}$, this measure pushes forward under $\Theta$ to $\overline r\,m_{\susp}$, and $\Theta$ intertwines the special flow on $X_r$ with $(\psi_s)_{s\in\R}$. Hence $\overline r\,m_{\susp}$, and therefore its normalization $m_{\susp}$, is invariant under $(\psi_s)_{s\in\R}$. Thus the flow is measure-preserving.
\end{proof}

We call $(\psi_s)_{s\in\R}$ the \emph{suspension flow} over $\sigma$ with roof function $r$.

\begin{lemma}\label{lem:suspension-ergodic}
	The suspension flow $(\psi_s)_{s\in\R}$ on $(X_{\susp},m_{\susp})$ is ergodic.
\end{lemma}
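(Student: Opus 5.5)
The plan is to reduce ergodicity of the suspension flow to ergodicity of the base transformation $\sigma$, which is \cref{lem:sigma-ergodic}, either by citing the standard fact or by the direct argument below.

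The standard fact is that a special flow over an ergodic invertible transformation with positive integrable roof function is ergodic. Einsiedler--Ward treat this in Chapter~9, alongside Lemma~9.23, and it is the same framework they use to deduce ergodicity of the geodesic flow in Proposition~9.25. So the first step is to transport everything to the model $X_r=\{(\omega,t):0\le t<r(\omega)\}$ via the bijection $\Theta$ from the proof of \cref{lem:suspension-flow}. Under $\Theta$, the measure $m_{\susp}$ corresponds to the normalized suspension measure and $(\psi_s)$ corresponds to the special flow. The hypotheses are available from earlier results: $\sigma$ is invertible and measure-preserving, and $r>\log 2$ is measurable and integrable by \cref{lem:roof-function}.

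If I argue directly instead, I proceed as follows.
\begin{enumerate}
	\item Let $A\subset X_{\susp}$ be Borel with $m_{\susp}(\psi_s^{-1}A\triangle A)=0$ for all $s$.
	\item Replace $A$ by a strictly invariant set differing from it by a null set, namely $A'=\{z:\int_0^1\mathbf 1_A(\psi_s z)\,\dd s\text{ and its translates are }1\}$. Concretely, take $A'\coloneqq\{z: \mathbf 1_A(\psi_s z)=1 \text{ for a.e. } s\in\R\}$. By Fubini, together with the joint measurability of $(s,z)\mapsto\psi_s z$ coming from continuity of the flow, $A'$ is Borel, exactly $\psi$-invariant, and satisfies $m_{\susp}(A\triangle A')=0$.
	\item An exactly invariant set is a union of orbits. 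Orbits are the images of $\{\omega\}\times\R$, so $A'=\{[\omega,t]:\omega\in B,\ t\in\R\}$ with $B\coloneqq\{\omega:[\omega,0]\in A'\}$. Here $B$ is Borel, and $\sigma B=B$ because $[\sigma\omega,0]=[\omega,r(\omega)]$ lies on the same orbit.
	\item By \cref{lem:sigma-ergodic}, $m_\Sigma(B)\in\{0,1\}$.
	\item The definition of $m_{\susp}$ gives $m_{\susp}(A')=\overline r^{-1}\int_B r\,\dd m_\Sigma$. This equals $0$ or $1$, since $r$ is integrable.
\end{enumerate}

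The main obstacle is the measure-theoretic step~2: producing a genuinely invariant Borel representative from an almost-invariant set for a continuous-time flow. The difficulty is that the null sets depend on $s$ and there are uncountably many $s$. I would handle this by the Fubini argument on $\R\times X_{\susp}$, using joint measurability of $(s,z)\mapsto \psi_s z$ from \cref{lem:suspension-flow}. Alternatively, this is exactly the point where citing the Einsiedler--Ward treatment of ergodicity for special flows saves work.
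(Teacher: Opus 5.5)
Your proposal is correct and follows the paper's route. The paper combines the ergodicity of $\sigma$ (\cref{lem:sigma-ergodic}) and the integrability of $r$ (\cref{lem:roof-function}) with Einsiedler--Ward's Lemma~9.24 on special flows, then passes to $m_{\susp}$ by normalization, just as you do via $\Theta$. Your optional direct argument is also sound and essentially reproves that cited lemma: a strictly invariant Borel representative via Fubini, reduction to a $\sigma$-invariant base set $B$, and the formula $m_{\susp}(A')=\overline{r}^{-1}\int_B r\,\dd m_\Sigma$.
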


\begin{proof}
	By \cref{lem:sigma-ergodic}, the transformation $\sigma\colon(\Sigma,m_\Sigma)\to(\Sigma,m_\Sigma)$ is ergodic. Moreover, $r$ is integrable by \cref{lem:roof-function}. Hence Einsiedler--Ward~\cite[Lemma~9.24]{EinsiedlerWard2011} implies that the corresponding special flow is ergodic with respect to its finite suspension measure. Since $m_{\susp}$ is the normalization of this measure, the suspension flow is ergodic with respect to $m_{\susp}$.
\end{proof}

\begin{remark}
	The ergodicity of $(\psi_s)_{s\in\R}$ ultimately comes from Hopf's ergodicity theorem for the modular geodesic flow with respect to Haar measure, see Einsiedler--Ward~\cite[Theorem~9.21 and the proof of Proposition~9.25]{EinsiedlerWard2011} for the connection with the corresponding continued-fraction suspension.
\end{remark}

\subsection{Closed orbits}

By a closed orbit of $(\psi_s)_{s\in\R}$ we mean an orbit
\[
	C=\{\psi_s z:s\in\R\}\subset X_{\susp}
\]
for which $\psi_Tz=z$ for some $T>0$. Its length $\ell(C)$ is the least positive such $T$.

Series~\cite{Series1985} gives the classical correspondence used here. If $\sigma^N\omega=\omega$ and $\sigma^j\omega=(x_j,y_j,\epsilon_j)$, then $N$ is even because $\sigma$ flips parity, and \eqref{eq:roof-coboundary} telescopes to
\begin{equation}\label{eq:periodic-roof-sum}
	\sum_{j=0}^{N-1}r(\sigma^j\omega)=-2\sum_{j=0}^{N-1}\log x_j.
\end{equation}
If $[\omega,0]\in C$ and $N$ is the least positive integer with $\sigma^N\omega=\omega$, then
\[
	\ell(C)=\sum_{j=0}^{N-1}r(\sigma^j\omega).
\]
A closed orbit is \emph{primitive} if it is not an iterate of a shorter one. Let $\calP$ be the set of primitive closed orbits.

In $\Gamma=\PSL_2(\Z)$, put
\[
	\matS\coloneqq\pmat{0&-1\\1&0},\qquad
	\matU\coloneqq\pmat{0&-1\\1&1},\qquad
	\matT\coloneqq\matS\matU=\pmat{1&1\\0&1},\qquad
	\matV\coloneqq\matS\matU^2=\pmat{1&0\\1&1}.
\]
The presentation
\[
	\Gamma=\langle\matS,\matU\mid\matS^2=\matU^3=1\rangle
	\simeq\Z/2\Z*\Z/3\Z
\]
is given in Serre~\cite[Chapter VII]{Serre1973}. 

Since $\matT=\matS\matU$ and $\matV=\matS\matU^2$, every cyclically reduced word of length greater than one is, up to cyclic shift, a cyclic $\matT/\matV$-word. The unmixed words are parabolic, whereas every mixed word is hyperbolic. The standard normal-form theory for free products then shows that two mixed cyclic $\matT/\matV$-words are conjugate exactly when they agree cyclically, and that the represented element is a proper power exactly when the cyclic word is a proper repetition. Decomposition into maximal cyclic runs
\[
	R_1^{a_1}\cdots R_N^{a_N},
	\qquad R_j \in \{\matT,\matV\},\quad a_j\geq1,
\]
is unique up to cyclic shift. By maximality, $R_j\ne R_{j+1}$ cyclically, including $R_N \ne R_1$. Thus the runs alternate between $\matT$ and $\matV$, so $N$ is even.

\begin{proposition}\label{prop:closed-orbit-dictionary}
	There is a natural one-to-one correspondence
	\[
		\mathcal P \longleftrightarrow \Pi,
	\]
	where $\Pi$ is the set of conjugacy classes of primitive hyperbolic elements in $\Gamma$.
\end{proposition}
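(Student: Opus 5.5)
We will build the bijection by passing through periodic points of $\sigma$ and cyclic $\matT/\matV$-words, using the free-product normal form recalled just before the statement.

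\emph{Step 1: from orbits to cyclic sequences.} A primitive closed orbit $C\in\calP$ meets the section $\{[\omega,0]\}$ in a finite $\sigma$-orbit $\{\omega,\sigma\omega,\dots,\sigma^{N-1}\omega\}$ with $N$ minimal. Write $\omega=(x,y,\epsilon)$. Since $\widehat T$ is the two-sided shift on $(\dots,a_{-1},a_0\mid a_1,a_2,\dots)$, the condition $\sigma^N\omega=\omega$ forces the bi-infinite sequence $(a_k)$ to be $N$-periodic, with $N$ even. Minimality of $N$ then depends on the parity coordinate. Because $\sigma$ flips $\epsilon$, the least period of $\omega$ is the least even period of $(a_k)$. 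Thus $C$ determines a cyclic word $(a_1,\dots,a_N)$ of even length, up to cyclic shifts by an even amount; the parity $\epsilon$ records which letter starts.

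\emph{Step 2: from cyclic sequences to words and conjugacy classes.} To such data we attach the cyclic word $\matT^{a_1}\matV^{a_2}\matT^{a_3}\cdots\matV^{a_N}$. Since $N$ is even, the runs alternate correctly around the cycle. Changing $\epsilon$ to the other parity swaps the roles of $\matT$ and $\matV$, so the parity coordinate selects which of $\matT,\matV$ carries $a_1$. An even cyclic shift of $(a_k)$ is a cyclic shift of the word. Conversely, by the facts recalled before the proposition, every mixed cyclic $\matT/\matV$-word has a unique decomposition into alternating maximal runs, and these runs recover $(a_k)$ together with the parity up to even shift. The word is mixed, since $N\ge2$ and both letters occur, so it is hyperbolic. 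Conjugacy of such words is exactly cyclic equality, which gives a bijection between the $\sigma$-orbits in question and hyperbolic conjugacy classes. Every hyperbolic class arises, because every cyclically reduced hyperbolic word is a mixed $\matT/\matV$-word.

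\emph{Step 3: primitivity.} A hyperbolic element is a proper power exactly when its cyclic word is a proper repetition. The $\matT/\matV$-word is a proper repetition exactly when $(a_1,\dots,a_N)$ has an even proper period; this uses the fact that runs must alternate, so a repetition block contains an even number of runs. An even proper period is exactly when $\sigma^{N'}\omega=\omega$ for some $N'<N$. Iterates of a closed orbit correspond to traversing the $\sigma$-cycle several times. Hence orbits that are primitive in our sense, with $N$ the minimal $\sigma$-period, match primitive classes.

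\emph{Main obstacle.} The main obstacle is the bookkeeping in Steps 1 and 2: checking that the parity coordinate exactly resolves the $\matT\leftrightarrow\matV$ ambiguity, so that odd shifts of $(a_k)$ with the other parity give the same orbit and the same class rather than a doubled count. Concretely, we will check that $\sigma$ applied once to $(x,y,\epsilon)$ shifts $(a_k)$ by one and flips $\epsilon$, and that this matches the cyclic rotation moving the first run $\matT^{a_1}$ to the end. Under that rotation the next run, which is a $\matV$-run, becomes the leading one, consistently with the parity flip. We will also note, for naturality, that this is Series' coding: the lift of $C$ is the geodesic with endpoints $x$ and $-1/y$ (or their images under $\matS$ according to parity), and the $\Gamma$-class of the matrix $\matT^{a_1}\matV^{a_2}\cdots$ fixes it. We use this only to justify calling the correspondence natural, not in the bijectivity argument.
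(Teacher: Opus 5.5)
Your proposal is correct and takes essentially the same route as the paper. You code the minimal $\sigma$-cycle by the alternating run word $\matT^{a_1}\matV^{a_2}\cdots$, with the parity choosing the leading letter, and you then use uniqueness of the maximal cyclic run decomposition together with the fact that any period of an alternating run word is even, hence a $\sigma$-period, to get well-definedness, primitivity, surjectivity and injectivity; one harmless slip occurs only in your side remark on naturality, namely that the two parities are not related by $\matS$ (an element of $\Gamma$, which cannot change the class, and indeed $\matS\matT\matS^{-1}=\matV^{-1}$) but by the reflection $z\mapsto 1/z$, i.e.\ conjugation by $\smat{0&1\\1&0}\in\PGL_2(\Z)$, which swaps $\matT$ and $\matV$ and sends the axis with endpoints $x,-1/y$ of the $\matV$-first word to the axis with endpoints $1/x,-y$ of the $\matT$-first word.
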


\begin{proof}
	For $C \in \calP$, we first construct $[\gamma_C] \in \Pi$. Choose $[\omega,0]\in C$, and let $N$ be the least positive period of $\omega$ under $\sigma$. If $a_j$ is the digit encountered at the $j$th step, then
	\[
		\omega=\left([\overline{a_1,\ldots,a_N}],[\overline{a_N,\ldots,a_1}],\epsilon\right),
	\]
	and define the mixed cyclic word
	\[
		\gamma_\omega \coloneqq \begin{cases}
			\matV^{a_1}\matT^{a_2} \cdots \matV^{a_{N-1}}\matT^{a_N}, &\text{if } \epsilon=0,\\[2mm]
			\matT^{a_1}\matV^{a_2} \cdots \matT^{a_{N-1}}\matV^{a_N}, &\text{if } \epsilon=1.
		\end{cases}
	\]
	The displayed factors $\matV^{a_1},\matT^{a_2},\ldots$ or $\matT^{a_1},\matV^{a_2},\ldots$ are precisely the maximal cyclic runs of the associated cyclic word. Replacing $\omega$ by $\sigma^j\omega$ cyclically shifts these runs, so the conjugacy class $[\gamma_C]\coloneqq[\gamma_\omega]$ depends only on $C$.

	If $\gamma_\omega$ were a proper power, its cyclic run word would have a proper period $d<N$. Since the runs alternate between $\matT$ and $\matV$, $d$ is even, and hence $\sigma^d\omega=\omega$, contradicting the minimality of $N$. Thus $\gamma_\omega$ is primitive and $[\gamma_C]\in\Pi$.

	We next show that this correspondence is bijective. Conversely, let $[\gamma]\in\Pi$ and write a mixed cyclic representative as $R_1^{a_1}\cdots R_N^{a_N}$. Set
	\[
		\epsilon\coloneqq \begin{cases}
			0, &\text{if } R_1=\matV,\\
			1, &\text{if } R_1=\matT,
		\end{cases}
		\qquad
		\omega\coloneqq \left([\overline{a_1,\ldots,a_N}], [\overline{a_N,\ldots,a_1}], \epsilon \right).
	\]
	Then $\sigma^N \omega = \omega$ and $\gamma_\omega$ represents $[\gamma]$. If $\omega$ had a smaller $\sigma$-period $d<N$, then $d$ would be even and the cyclic run word of $\gamma$ would have period $d$, contradicting the primitivity of $\gamma$. Hence the corresponding orbit lies in $\calP$ and maps to $[\gamma]$, proving surjectivity.

	Finally, choose $[\omega,0]\in C$ and $[\omega',0]\in C'$ as above. If $[\gamma_C]=[\gamma_{C'}]$, uniqueness of the maximal cyclic run decomposition implies that the corresponding digit and parity data differ by a cyclic shift. Hence $\omega'=\sigma^j\omega$ for some $j$, so $C=C'$. This proves injectivity.
\end{proof}

For a hyperbolic element $\gamma \in \Gamma$, we write $\ell(\gamma) \coloneqq \ell(C_\gamma)$ for the length of the associated closed geodesic on the modular surface. Since this depends only on the conjugacy class of $\gamma$, the quantity $\ell(\gamma_C)$ is well-defined for $C\in\mathcal P$.

\begin{lemma}\label{lem:length-preserving}
	The correspondence in \cref{prop:closed-orbit-dictionary} preserves lengths. More precisely, for every $C\in\mathcal P$, we have
	\[
		\ell(C)=\ell(\gamma_C).
	\]
\end{lemma}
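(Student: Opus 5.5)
We compute both sides explicitly and compare them. Fix $C\in\mathcal P$, choose $[\omega,0]\in C$ with minimal $\sigma$-period $N$ (even), and write $\omega=([\overline{a_1,\ldots,a_N}],[\overline{a_N,\ldots,a_1}],\epsilon)$.

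\textbf{Length of the orbit.} By \eqref{eq:periodic-roof-sum},
\[
	\ell(C)=-2\sum_{j=0}^{N-1}\log x_j,
\]
where $x_j=[\overline{a_{j+1},\ldots,a_{j+N}}]$ (indices mod $N$). The classical identity $\prod_{j=0}^{N-1}x_j^{-1}=\lambda$ holds, where $\lambda>1$ is the larger eigenvalue of the matrix
\[
	M\coloneqq\pmat{a_1&1\\1&0}\cdots\pmat{a_N&1\\1&0}.
\]
This follows from $x_{j}^{-1}=a_{j+1}+x_{j+1}$: the vector $(1,x_0)^{\mathsf T}$ is an eigenvector of $M$ (after transposing or reordering as needed), and its eigenvalue is $\prod x_j^{-1}$. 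Hence $\ell(C)=2\log\lambda$.

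\textbf{Length of the geodesic.} For the hyperbolic element $\gamma$, $\ell(C_\gamma)=2\log\lambda_\gamma$, where $\lambda_\gamma>1$ is the eigenvalue with $\lambda_\gamma+\lambda_\gamma^{-1}=|\tr\gamma|$. This is the standard fact that $\smat{\lambda&0\\0&\lambda^{-1}}$ translates $i\R_{>0}$ by hyperbolic distance $2\log\lambda$ in the metric $|\dd\tau|/\Im\tau$, together with conjugation invariance. Also, since $C$ is primitive, $\gamma_C$ is primitive by \cref{prop:closed-orbit-dictionary}, so $\ell(C_{\gamma_C})$ is the full translation length of $\gamma_C$ and not of a root.

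\textbf{Matching the traces.} It remains to show that $|\tr\gamma_\omega|=|\tr M|$, i.e.\ that $\gamma_\omega$ and $M$ have the same eigenvalues up to sign. Observe that
\[
	\matT^{a}\matV^{b}=\pmat{1&a\\0&1}\pmat{1&0\\b&1}=\pmat{a&1\\1&0}\pmat{b&1\\1&0}.
\]
Indeed, both sides equal $\smat{1+ab&a\\b&1}$. Since $N$ is even, in the case $\epsilon=1$ we get $\gamma_\omega=M$ exactly. In the case $\epsilon=0$, we have $\matV^{a}\matT^{b}=\pmat{0&1\\1&0}\matT^a\matV^b\pmat{0&1\\1&0}$, so $\gamma_\omega$ is conjugate, via an element of determinant $-1$, to $M$. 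Such a conjugation still preserves the trace. Note also that $\det\smat{a&1\\1&0}=-1$ but $N$ is even, so $\det M=1$; the trace therefore determines $\lambda$ via $\lambda+\lambda^{-1}=\tr M$. Hence $\lambda_{\gamma_C}=\lambda$ and $\ell(\gamma_C)=2\log\lambda=\ell(C)$.

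\textbf{Expected obstacle.} The main point needing care is the eigenvalue identity $\prod_j x_j^{-1}=\lambda$. I would verify it by applying the periodic relation $x_{j}^{-1}=a_{j+1}+x_{j+1}$ to show that $\smat{a_{j+1}&1\\1&0}(1,x_{j+1})^{\mathsf T}=x_j^{-1}(1,x_j)^{\mathsf T}$, up to the correct ordering of the product. The result is an eigenvector with positive entries, hence the Perron eigenvalue $\lambda>1$. One must also check that $\ell(C)$ is exactly this sum, with no smaller period; this holds because $N$ is the minimal period, as stated just before the proposition.
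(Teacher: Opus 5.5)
Your proof is correct. It shares its first step with the paper: both start from the telescoped identity \eqref{eq:periodic-roof-sum}, giving $\ell(C)=-2\sum_{j}\log x_j$. The two arguments differ in how this quantity is identified with $\ell(\gamma_C)$.

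The paper fixes a representative of parity $1$ and quotes Bengoechea--Herrero--Imamo\={g}lu~\cite[Proposition~3.2(i)]{BHI2026pre}. That proposition gives $\ell(\gamma_C)=2\sum_j\log x_j^{-1}$ once one translates $v_j=x_j^{-1}$.

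You instead prove this formula from scratch:
\begin{itemize}
\item The factorization $\matT^a\matV^b=\smat{a&1\\1&0}\smat{b&1\\1&0}$ identifies $\gamma_\omega$ (for $\epsilon=1$) with $M=\prod_j\smat{a_j&1\\1&0}$.
\item Conjugation by $\smat{0&1\\1&0}$ swaps $\matT$ and $\matV$, so it handles $\epsilon=0$ without having to choose a parity.
\item The relations $\smat{a_{j+1}&1\\1&0}(1,x_{j+1})^{\mathsf T}=x_j^{-1}(1,x_j)^{\mathsf T}$ show that $(1,x_0)^{\mathsf T}$ is an eigenvector of $M$ with eigenvalue $\prod_j x_j^{-1}>1$. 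This works in the natural order of the product, so no transposing or reordering is actually needed.
\end{itemize}
Together with $\det M=1$ (since $N$ is even) and $\ell(C_\gamma)=2\log\lambda_\gamma$, this closes the argument.

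The paper's route is shorter but leaves implicit the check that the conventions of the cited proposition match those used here. Yours is self-contained and elementary; it is the classical link between periodic continued fractions and eigenvalues. It also makes that convention-matching explicit.
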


\begin{proof}
	Choose $[\omega,0]\in C$ with parity coordinate $1$, let $N$ be its least $\sigma$-period, and write
	\[
		\omega=\left([\overline{a_1,\ldots,a_N}],[\overline{a_N,\ldots,a_1}],1\right),
		\qquad \sigma^j\omega=(x_j,y_j,\epsilon_j).
	\]
	By \eqref{eq:periodic-roof-sum},
	\[
		\ell(C)=\sum_{j=0}^{N-1}r(\sigma^j\omega)
		=-2\sum_{j=0}^{N-1}\log x_j.
	\]
	Bengoechea--Herrero--Imamo\={g}lu~\cite[Proposition~3.2(i)]{BHI2026pre} give the following formula. In the notation of that proposition, $v_j=x_j^{-1}$:
	\[
		\ell(\gamma_C)=2\sum_{j=0}^{N-1}\log x_j^{-1},
	\]
	which is the same quantity.
\end{proof}

For $L>0$, put
\[
	\calP(L) \coloneqq \{C\in\calP : \ell(C) \leq L\}.
\]
By \cref{prop:closed-orbit-dictionary,lem:length-preserving}, $C\mapsto[\gamma_C]$ restricts to a bijection $\calP(L) \to \Pi(L)$. In particular, $\#\calP(L) = \#\Pi(L)$.

\section{Bengoechea--Imamo\={g}lu's suspension observable}\label{section:BHI-suspension-observable}

Let $C_b(X_{\susp})$ be the space of bounded continuous functions $X_{\susp}\to\C$. For $C\in\calP$ and $F \in C_b(X_{\susp})$, put
\[
	m_C(F) \coloneqq \frac{1}{\ell(C)} \int_0^{\ell(C)} F(\psi_s z)\,\dd s
\]
for any $z\in C$. This is independent of $z$.

\begin{theorem}[Suspension equidistribution, following Pollicott~\cite{Pollicott1986}]\label{thm:pollicott-equidistribution}
	For every $F \in C_b(X_{\susp})$,
	\[
		\lim_{L\to\infty} \frac{1}{\#\calP(L)} \sum_{C \in \calP(L)} m_C(F) = \int_{X_{\susp}} F\,\dd m_{\susp}.
	\]
\end{theorem}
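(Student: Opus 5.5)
The plan is to transfer the problem to the modular geodesic flow on $X_{\mathrm{mod}}=\Gamma\backslash\PSL_2(\R)$ and to use the equidistribution of closed geodesics there. The geodesic version says that the normalized sum over primitive closed geodesics of length $\le L$ of the orbital measures converges weak-$*$ to Haar measure. This is Bowen--Margulis type equidistribution, and for $\PSL_2(\Z)$ it follows from the Selberg trace formula, as in Sarnak's work, or from mixing of the geodesic flow. I would, however, follow Pollicott's symbolic route, which stays inside the suspension and avoids needing a continuous conjugacy with $X_{\mathrm{mod}}$.

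\textbf{Step 1 (reduction to a dense class).} By \cref{prop:closed-orbit-dictionary} and \cref{lem:length-preserving}, $\#\calP(L)=\#\Pi(L)\sim \mathrm{Li}(e^L)$. The measures $\nu_L\coloneqq\#\calP(L)^{-1}\sum_{C\in\calP(L)}m_C$ are probability measures, so it suffices to prove two things:
\begin{itemize}
\item tightness of $(\nu_L)$;
\item convergence $\nu_L(F)\to m_{\susp}(F)$ for $F$ in a class whose span is dense in $C_b$ on compacts, for instance functions depending on finitely many digits $a_{-k},\dots,a_k$, the parity, and the height $t/r(\omega)$, chosen continuous.
\end{itemize}
Tightness amounts to showing that the proportion of orbit time spent where some digit is $\ge M$, or where $x$ or $y$ is near $0$, is small uniformly in $L$. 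By \cref{lem:roof-function} the roof is $\ge\log a$ there.

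\textbf{Step 2 (pressure/zeta function).} For a cylinder-type $F$, I would consider the weighted zeta function
\[
\zeta(s,z)=\prod_{C\in\calP}\bigl(1-e^{-s\ell(C)+z\ell(C)m_C(F)}\bigr)^{-1},
\]
written through the transfer operator of the Gauss map,
\[
\mathcal L_{s,z}g(x)=\sum_{a\ge1}(a+x)^{-2s}e^{z\Phi_F(\cdot)}g\bigl(1/(a+x)\bigr),
\]
where $\Phi_F$ is $F$ integrated along the roof. Using \eqref{eq:periodic-roof-sum}, closed orbits correspond to periodic points, with length $-2\sum\log x_j$. The parity coordinate is handled by a $\Z/2$-twisted operator. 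This operator is nuclear or quasi-compact on holomorphic functions for $\Re s>1/2$, with leading eigenvalue $1$ at $s=1$, $z=0$ (the Gauss measure). Analytic perturbation then gives a simple pole at $s=1$ of $\zeta'/\zeta$. The derivative in $z$ at $z=0$ gives the residue weighted by $\int F\,\dd m_{\susp}$, the equilibrium state of $-2\log x$ being the Gauss--$\widehat\mu$ measure with mean roof $\overline r$.

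\textbf{Step 3 (Tauberian).} Apply the Wiener--Ikehara theorem on $\Re s=1$. This requires analyticity of $\zeta$ on $\Re s=1$ away from $s=1$, i.e.\ that $1$ is not an eigenvalue of $\mathcal L_{1+it,0}$ for $t\ne0$. That follows because the lengths $\ell(\gamma)=2\log\varepsilon_\gamma$ do not lie in a lattice $c\Z$. It yields $\sum_{\ell(C)\le L}\ell(C)m_C(F)\sim e^L\int F\,\dd m_{\susp}$, and partial summation removes the weight $\ell(C)$. Comparing with $F=1$ gives the claim for cylinder $F$.

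\textbf{Main obstacle.} The hardest part is the noncompactness: infinitely many digits make the roof unbounded, and a general $F\in C_b$ is not of cylinder type. I expect the main work to be a uniform tail estimate, namely that the orbit-averaged mass in $\{a\ge M\}$ is $O(M^{-\delta})$ uniformly in large $L$. I would try to obtain it from the same transfer operator with $e^{z\mathbf 1_{a\ge M}}$ weights, since $\sum_{a\ge M}a^{-2s}$ is small near $s=1$. Combined with Step 1, this closes the argument.
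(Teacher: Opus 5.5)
Your outline is sound, but it takes a genuinely different route from the paper. What you sketch is Pollicott's transfer-operator argument, which the paper only invokes in a remark. In its appendix the paper replaces your zeta function, analytic perturbation and Wiener--Ikehara steps by Ustinov's effective count of reduced quadratic irrationals $w$ with $\varrho(w)\le L$ and $(w,-1/\widetilde w)$ in a box. Applied to digit cylinders with prescribed parity, and then to all cylinders by $\sigma$-invariance, this gives directly
$\sum_{C\in\calP(L)}S_C(\mathbf 1_A)=\overline r^{-1}m_\Sigma(A)e^L+o(e^L)$.

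The paper also organizes the limit differently. It does not use a dense class on $X_{\susp}$. It works on the base with the measures $\nu_L=\#\calP(L)^{-1}\sum_C\ell(C)^{-1}\sum_{\omega\in O_C}\delta_\omega$ and the column integrals $F^\sharp$. In that setting the tightness you call the main obstacle is almost free:
\begin{itemize}
\item The exact identity $\int r\,\dd\nu_L=1$, together with $r\ge\log a$ and $\sigma$-invariance, gives $\nu_L(\{a_j\ge B\})\le1/\log B$ uniformly in $L$ and $j$. This yields a compact core built from products of finite digit sets.
\item Passing to $X_{\susp}$ then only needs uniform integrability of $r$. This follows from $\int\min(r,M)\,\dd\nu_L\to\int\min(r,M)\,\dd\nu$ together with the equal first moments $\int r\,\dd\nu_L=\int r\,\dd\nu=1$.
\end{itemize}
No uniform rate such as $O(M^{-\delta})$ is needed.

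Your route is dynamically self-contained and would give more: error terms, H\"older observables, and other Markov codings. The sketch, however, passes over several pieces of real work:
\begin{itemize}
\item $F^\sharp=r\phi$ depends on the past through $-\tfrac12\log(yy')$. You need a Sinai--Bowen cohomology step before a one-sided Mayer operator applies. It is available here because the far-past variation of $\log y$ decays exponentially.
\item Weights that are locally constant on digit cylinders are not holomorphic on a disc around $[0,1]$. You therefore need a finite-state, vector-valued version of the operator, with digits truncated at some level.
\item The parity twist means $\zeta$ involves both $\det(1-\mathcal L_s)$ and $\det(1+\mathcal L_s)$. So you must exclude the eigenvalue $-1$ of $\mathcal L_{1+it}$ as well as $1$. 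Your non-lattice argument covers this, because an odd Gauss period $p$ gives geodesic length $2\cdot\bigl(-2\sum_{j<p}\log T^jw\bigr)$.
\item For time-tightness on $X_{\susp}$ the relevant weight is $r\mathbf 1_{\{a\ge M\}}$, not $\mathbf 1_{\{a\ge M\}}$. Visit frequencies alone do not control the unbounded roof.
\end{itemize}
For each fixed $M$, the $\limsup$ in $L$ that your Tauberian step already produces is enough.
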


\begin{remark}\label{rem:pollicott-route}	
	Pollicott~\cite[Theorem 1 and its Remark]{Pollicott1986} states the corresponding equidistribution theorem for closed geodesics on the modular surface rather than literally in the form above. His proof is carried out through the associated continued-fraction suspension and Mayer's transfer operators. The same argument gives the formulation for bounded continuous observables on $X_{\susp}$ used here. In \cref{sec:Proof-Pollicott}, we give an alternative proof based on Ustinov's effective counting theorem for reduced quadratic irrationals.
\end{remark}

Using Bengoechea--Imamo\={g}lu's decomposition~\cite{BI2020}, we construct $\calJ_f\in C_b(X_{\susp})$ for $f\in M_0^!(\Gamma)$ such that
\[
	m_C(\calJ_f)=\val_f(\gamma_C),\qquad
	\int_{X_{\susp}}\calJ_f\,\dd m_{\susp}=\alpha_f.
\]

\subsection{Bengoechea--Imamo\={g}lu's decomposition formula}

Bengoechea--Imamo\={g}lu's decomposition originates in~\cite[Lemma~4.1]{BI2019}. We use the regular-continued-fraction form in~\cite[Remark~2.4 and Lemma~2.5]{BHI2026}, translated to our convention of \cref{section:suspension}. Let
\[
	\calA \coloneqq \left\{e^{it} : \frac{\pi}{3}\leq t\leq\frac{2\pi}{3}\right\},
\]
oriented from $e^{\pi i/3}$ to $e^{2\pi i/3}$. For $\gamma=\matV^{a_1}\matT^{a_2}\cdots\matV^{a_{N-1}}\matT^{a_N}$, where $N$ is even and every $a_i\geq1$, put $s(\gamma)=\sum_i a_i$ and expand $\gamma=g_1\cdots g_{s(\gamma)}$ into $\matT$- and $\matV$-letters. For $1\leq k\leq s(\gamma)$, let
\[
	\gamma^{(k)} \coloneqq g_k g_{k+1} \cdots g_{s(\gamma)} g_1\cdots g_{k-1},
\]
and denote its attracting and repelling fixed points by $w^{(k)}$ and $\widetilde w^{(k)}$, respectively. A cyclic change of the initial letter only permutes these pairs.

\begin{proposition}[Bengoechea--Imamo\={g}lu]\label{prop:BI-decomposition}
	For every $f\in M_0^!(\Gamma)$,
	\begin{align}\label{eq:BHI-global}
		I_f(\gamma) &= \int_{\calA} f(z) \sum_{k=1}^{s(\gamma)} \left( \frac{1}{z-w^{(k)}}-\frac{1}{z-\widetilde w^{(k)}} \right)\,\dd z.
	\end{align}
\end{proposition}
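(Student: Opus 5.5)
The idea is to break the closed geodesic $C_\gamma$ into arcs, one per letter of $\gamma$, and to use $\Gamma$-invariance of $f$ to move every arc onto the fixed arc $\calA$. A residue computation then turns the geodesic integrals into the displayed integrals of $f$ against $\dd z$.

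\textbf{Step 1 (a single geodesic as a differential form).} For a geodesic in $\bbH$ with endpoints $\widetilde w,w\in\R$, oriented from $\widetilde w$ to $w$, hyperbolic arclength is
\[
	\frac{|\dd\tau|}{\Im\tau}=\frac{(w-\widetilde w)\,\dd\tau}{(\tau-w)(\tau-\widetilde w)}
\]
up to a fixed sign. This is checked on the imaginary axis ($\widetilde w=0$, $w=\infty$, where the form becomes $\dd\tau/\tau$) and transported by $\PSL_2(\R)$, since the form $\frac{(w-\widetilde w)\dd\tau}{(\tau-w)(\tau-\widetilde w)}$ is invariant when $\tau,w,\widetilde w$ are moved simultaneously. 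So
\[
	I_f(\gamma)=\int_{z_0}^{\gamma z_0} f(\tau)\,\omega_{w,\widetilde w}(\tau),
	\qquad
	\omega_{w,\widetilde w}(\tau)\coloneqq\Bigl(\frac{1}{\tau-w}-\frac{1}{\tau-\widetilde w}\Bigr)\dd\tau .
\]
Here the path runs along the axis of $\gamma$. The form $f\,\omega_{w,\widetilde w}$ is holomorphic on $\bbH$, so this path may be deformed to any path from $z_0$ to $\gamma z_0$ in $\bbH$. The deformation is legitimate because $\gamma$ fixes $w$ and $\widetilde w$, so $\gamma^*\omega=\omega$, and the integral over a fundamental segment of the axis depends only on the homotopy class rel endpoints.

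\textbf{Step 2 (deform onto translates of $\calA$).} I would take $z_0=e^{\pi i/3}$ and form the path
\[
	\calA,\; g_1\calA,\; g_1g_2\calA,\;\ldots
\]
To make this concatenation continuous I use the fact that each of $\matT$ and $\matV$ maps one endpoint of $\calA$ to the other. Indeed $\matT e^{2\pi i/3}=e^{\pi i/3}$ up to the correct ordering, and similarly for $\matV$. The exact orientation conventions must be matched with the paper's: $\gamma=g_1\cdots g_s$ and $\calA$ runs from $e^{\pi i/3}$ to $e^{2\pi i/3}$. Once matched, this chain is a path from $z_0$ to $\gamma z_0$, possibly after replacing $\gamma$ by $\gamma^{-1}$ or a cyclic shift, both of which leave $I_f$ unchanged. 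Its $k$th piece is $(g_1\cdots g_{k-1})\calA$.

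\textbf{Step 3 (pull each piece back).} Substitute $\tau=h_k z$ with $h_k=g_1\cdots g_{k-1}$. Then $f(h_kz)=f(z)$, and
\[
	h_k^*\omega_{w,\widetilde w}=\omega_{h_k^{-1}w,\,h_k^{-1}\widetilde w}.
\]
The points $h_k^{-1}w$ and $h_k^{-1}\widetilde w$ are exactly the attracting and repelling fixed points of $h_k^{-1}\gamma h_k=\gamma^{(k)}$. Summing over $k$ gives \eqref{eq:BHI-global}.

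\textbf{Main obstacle.} The main difficulty is the bookkeeping in Step 2: checking that the concatenated arcs genuinely form a continuous path from $z_0$ to $\gamma z_0$ with consistent orientation, and that the overall sign matches the positive length $I_1(\gamma)$. Holomorphy of $f$ on $\bbH$ handles everything else, with no growth issues, because all paths stay in a compact region. I would fix the signs by testing $f=1$ on $\gamma_\phi=\matV\matT$. In that case the right side should reproduce $\ell(C_{\gamma_\phi})=2\log\phi^2$ (equivalently $-2\sum\log x_j$ from \cref{lem:length-preserving}), since the sum telescopes via $\log$ differences at the endpoints of $\calA$.
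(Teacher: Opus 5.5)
Your proof is correct, and it takes a different route from the paper. The paper gives no argument for this proposition: it imports the identity from Bengoechea--Imamo\={g}lu, in the regular-continued-fraction form of Bengoechea--Herrero--Imamo\={g}lu, and translates conventions.

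The bookkeeping you left open does close, but not quite as you wrote it. Both $\matT$ and $\matV$ send $e^{2\pi i/3}$ to $e^{\pi i/3}$. What matters is that they move the endpoints of $\calA$ in the same direction. So, with $h_k=g_1\cdots g_{k-1}$, the arc $h_{k+1}\calA=h_kg_k\calA$ ends at $h_ke^{\pi i/3}$, which is the initial point of $h_k\calA$. The arcs therefore join in the order $h_s\calA,h_{s-1}\calA,\ldots,h_1\calA$, each with the orientation inherited from $\calA$. They form a path $P$ from $h_se^{\pi i/3}=\gamma e^{2\pi i/3}$ to $e^{2\pi i/3}$. In the order you listed, starting at $e^{\pi i/3}$, consecutive arcs do not meet. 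You do not need to pass to $\gamma^{-1}$; reversing the order is enough.

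The sign then needs no test case. Along the axis from $\widetilde w$ to $w$ one has $\omega_{w,\widetilde w}=-|\dd\tau|/\Im\tau$; for $\widetilde w=-1$, $w=1$ the form is $\dd\theta/\sin\theta$ with $\theta$ decreasing. The path $P$ runs against the translation direction of $\gamma$. Take $z_0=e^{2\pi i/3}$ and use the $\gamma$-invariance of $f\,\omega_{w,\widetilde w}$ to move the base point onto the axis. This gives $\int_P f\,\omega_{w,\widetilde w}=-\int_{z_0}^{\gamma z_0}f\,\omega_{w,\widetilde w}=I_f(\gamma)$, with exactly the sign in the statement. Your Step~3 then yields \eqref{eq:BHI-global} verbatim. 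For $\gamma_\phi$ the right-hand side is indeed $4\log\phi$, as you predicted.

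As for what each route buys: your argument is self-contained. It uses only that $f$ is holomorphic and $\Gamma$-invariant on $\bbH$; the cusp plays no role because $P$ is a compact chain of arcs. It is also native to the paper's $\matT/\matV$ letter convention, so the translation from the cited formulation, which the paper performs without details, becomes unnecessary. It shows directly why there is one summand per letter, namely one arc $h_k\calA$ of the chain, which makes the regrouping into runs in \cref{prop:BHI-suspension} transparent. The citation, in turn, keeps the exposition short and ties the construction to the existing literature on this decomposition.
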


We group the letter-by-letter terms in \eqref{eq:BHI-global} into the $N$ successive $\matT$- or $\matV$-runs recorded by $\sigma$.

\begin{definition}\label{def:BHI-block}
	For $\omega = (x,y,\epsilon) \in \Sigma$, put $a(\omega) \coloneqq \lfloor 1/x \rfloor$ and $u(\omega) \coloneqq Tx$. For $a \geq 1$, $0 < u,y < 1$, and $z \in \calA$, define
	\begin{align*}
		B_a^{(0)}(z;u,y) &\coloneqq \sum_{m=1}^a \left( \frac{1}{z-(m+u)^{-1}}-\frac{1}{z+(a-m+y)^{-1}} \right),\\
		B_a^{(1)}(z;u,y) &\coloneqq \sum_{m=1}^a \left( \frac{1}{z-(m+u)}-\frac{1}{z+(a-m+y)} \right).
	\end{align*}
	For $f\in M_0^!(\Gamma)$, define
	\[
		\Phi_f(\omega) \coloneqq \int_{\calA} f(z) B_{a(\omega)}^{(\epsilon)} \bigl(z;u(\omega),y\bigr)\,\dd z.
	\]
\end{definition}

For example, take $\gamma=\matV^2\matT=\smat{1&1\\2&3}$. Its three cyclic fixed points are
\[
	w^{(1)}=[\overline{2,1}],\qquad w^{(2)}=[1,\overline{1,2}],\qquad w^{(3)}=[0,\overline{1,2}],
\]
corresponding to $\matV^2\matT$, $\matV\matT\matV$, and $\matT\matV^2$. The associated $\sigma$-orbit and its two blocks are
\[
\begin{gathered}
	\omega=(x,y,0)=([\overline{2,1}],[\overline{1,2}],0),\qquad
	\sigma\omega=(x',y',1)=([\overline{1,2}],[\overline{2,1}],1),\\
	\sum_{k=1}^2\left(\frac1{z-w^{(k)}}-\frac1{z-\widetilde w^{(k)}}\right)
	=B_{a(\omega)}^{(0)}(z;u(\omega),y),\\
	\frac1{z-w^{(3)}}-\frac1{z-\widetilde w^{(3)}}
	=B_{a(\sigma\omega)}^{(1)}(z;u(\sigma\omega),y').
\end{gathered}
\]
Thus the three letter terms become the two run contributions.

More generally, the preceding decomposition takes the following form. Given $C \in \calP$, choose $[\omega,0]\in C$ and let $N$ be the least positive $\sigma$-period of $\omega$. Put
\[
	O_C \coloneqq \{\omega, \sigma\omega, \ldots, \sigma^{N-1}\omega\} \subset \Sigma.
\]
This set is independent of the chosen point on $C$.

\begin{proposition}[Reformulation of \cref{prop:BI-decomposition}]
\label{prop:BHI-suspension}
	For every $f\in M_0^!(\Gamma)$ and $C\in\calP$,
	\[
		I_f(\gamma_C) = \sum_{\omega\in O_C}\Phi_f(\omega).
	\]
\end{proposition}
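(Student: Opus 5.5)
We derive \cref{prop:BHI-suspension} from \cref{prop:BI-decomposition} by choosing a convenient representative and regrouping the letter sum run by run. First, pick $[\omega,0]\in C$ with parity coordinate $0$, which we may do because $N$ is even and $\sigma$ flips parity. Write $\omega=([\overline{a_1,\ldots,a_N}],[\overline{a_N,\ldots,a_1}],0)$. By construction in \cref{prop:closed-orbit-dictionary}, $\gamma_C$ is then represented by $\gamma=\matV^{a_1}\matT^{a_2}\cdots\matV^{a_{N-1}}\matT^{a_N}$, which is exactly the shape required in \cref{prop:BI-decomposition}, with $s(\gamma)=a_1+\cdots+a_N$. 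Since $I_f$ depends only on the conjugacy class, and the right side of \eqref{eq:BHI-global} is invariant under cyclic relabeling, it suffices to show that the $s(\gamma)$ letter terms split into $N$ consecutive groups. The group belonging to the $j$th run (letters $a_1+\cdots+a_{j-1}+1,\ldots,a_1+\cdots+a_j$) should equal $B^{(\epsilon_{j-1})}_{a(\sigma^{j-1}\omega)}(z;u(\sigma^{j-1}\omega),y_{j-1})$. Summing over $j$ and integrating against $f$ along $\calA$ then gives $\sum_{\omega'\in O_C}\Phi_f(\omega')$, because $O_C=\{\sigma^{j-1}\omega\}_{j=1}^N$.

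The key step is the explicit computation of the fixed points $w^{(k)},\widetilde w^{(k)}$ when $g_k$ lies inside a run. Consider first a $\matV$-run $\matV^{a}$ at step $j$ (so $\epsilon_{j-1}=0$), with $a=a_j$. Write $\sigma^{j-1}\omega=(x,y,0)$, where $x=[\overline{a_j,a_{j+1},\ldots}]$, $u=Tx=[\overline{a_{j+1},\ldots}]$, and $y=[a_{j-1},a_{j-2},\ldots]$. Let $k$ be the position of the $m$th remaining letter, so that $\gamma^{(k)}=\matV^{a-m+1}\matT^{a_{j+1}}\cdots\matV^{m-1}$.

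For the attracting fixed point, note that $\matV$ acts by $z\mapsto z/(z+1)$, i.e.\ $1/z\mapsto 1/z+1$. The standard continued-fraction dictionary shows that the attracting point of the cyclic word beginning with $\matV^{b}\matT^{c}\cdots$ is $[\,b,c,\ldots]$ read periodically. Hence $1/w^{(k)}=(a-m+1)+u$ after the appropriate reindexing. Replacing $m$ by $a+1-m$ in the sum, this is the term $(m+u)^{-1}$.

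For the repelling fixed point, it is the attracting point of the inverse word, which reads the digits backwards. With $\matV^{m-1}$ at the tail, this gives $\widetilde w^{(k)}=-(\,m-1+y)^{-1}$. After the same reindexing this becomes $-(a-m+y)^{-1}$, matching $B^{(0)}_a$.

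A $\matT$-run is treated the same way, using $\matT z=z+1$. There the fixed points are $m+u$ and $-(a-m+y)$, with no reciprocal, matching $B^{(1)}_a$. This is consistent with the worked example $\matV^2\matT$.

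I expect the main obstacle to be precisely this bookkeeping: pinning down which of $u$ or $y$ pairs with which index shift, and verifying the sign and reciprocal conventions against the translation from \cite{BHI2026}. I will do it by checking the fixed-point equation $\gamma^{(k)}w=w$ directly. Write $\gamma^{(k)}=\matV^{a-m+1}\,P\,\matV^{m-1}$, where $P$ is the remaining product. Conjugating by $\matV^{m-1}$ reduces the problem to the case $m=1$. In that case the fixed points are the purely periodic continued fractions determined by $\omega$ and its reverse, i.e.\ $x$ and $-1/y$ in the appropriate chart, and then $\matV^{-(m-1)}$ shifts $1/w$ by integers. Finally, the inner sums in $B^{(\epsilon)}_a$ are finite, and $f$ is continuous on the compact arc $\calA$, which avoids the real poles. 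Therefore interchanging the finite sums with the integral is immediate.
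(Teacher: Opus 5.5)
Your proposal is correct and follows the same route as the paper. You take the parity-$0$ point of $O_C$, so that $\gamma_C$ is represented by $\matV^{a_1}\matT^{a_2}\cdots\matT^{a_N}$, and you regroup the $s(\gamma_C)$ letter terms of \eqref{eq:BHI-global} run by run into the blocks $B^{(\epsilon_i)}_{a_i}(z;u_i,y_i)$.

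The one difference is where the fixed-point pairs of each run come from. The paper imports them from \cite[Remark~2.4]{BHI2026}, whereas you derive them by conjugating the run-initial word by $\matV^{m-1}$ (resp.\ $\matT^{m-1}$) and using $\widetilde{x}=-1/y$. Your formulas check out, including the reindexing $m\mapsto a+1-m$.
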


\begin{proof}
	Choose $\omega_0\in O_C$ with parity $0$ and write
	\[
		\omega_i\coloneqq\sigma^i\omega_0 =(x_i,y_i,\epsilon_i), \qquad (0 \leq i < N).
	\]
	Put $a_i=a(\omega_i)$ and $u_i=u(\omega_i)$. Translating~\cite[Remark~2.4]{BHI2026} to our convention, the $a_i$ pairs in the run represented by $\omega_i$ are, up to order,
	\[
		\begin{cases}
			\bigl((m+u_i)^{-1},-(a_i-m+y_i)^{-1}\bigr), &\text{if } \epsilon_i=0,\\[2mm]
			\bigl(m+u_i,-(a_i-m+y_i)\bigr), &\text{if } \epsilon_i=1,
		\end{cases}
		\qquad (1\leq m\leq a_i).
	\]
	Thus the run contributes $B_{a_i}^{(\epsilon_i)}(z;u_i,y_i)$. These blocks partition the cyclic word and exhaust exactly the $s(\gamma_C)=\sum_i a_i$ terms of \eqref{eq:BHI-global}. Substituting them there gives
	\begin{align*}
		I_f(\gamma_C) = \sum_{i=0}^{N-1} \int_{\calA} f(z)B_{a_i}^{(\epsilon_i)}(z;u_i,y_i)\,\dd z = \sum_{i=0}^{N-1}\Phi_f(\omega_i) = \sum_{\omega\in O_C}\Phi_f(\omega)
	\end{align*}
	as required.
\end{proof}

\subsection{Bounded observable}

We first bound the block observable.

\begin{lemma}\label{lem:Phi-bound}
	For every $f\in M_0^!(\Gamma)$, there exists a constant $C_f > 0$ such that
	\[
		|\Phi_f(\omega)|\leq C_f r(\omega)
	\]
	for every $\omega\in\Sigma$. Moreover, $\Phi_f/r \in C_b(\Sigma)$.
\end{lemma}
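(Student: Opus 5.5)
We will bound $\Phi_f(\omega)$ directly from \cref{def:BHI-block}, using only that $f$ is continuous on the compact arc $\calA$ and that the poles appearing in $B_a^{(\epsilon)}$ stay uniformly away from $\calA$. Put $M_f\coloneqq\max_{z\in\calA}|f(z)|$, which is finite since $f$ is holomorphic on $\bbH\supset\calA$, and note that $\calA$ has length $\pi/3$. Then
\[
	|\Phi_f(\omega)|\leq \frac{\pi}{3}M_f\sup_{z\in\calA}\bigl|B^{(\epsilon)}_{a}(z;u,y)\bigr|,
\]
so it suffices to show that $\sup_{z\in\calA}|B^{(\epsilon)}_a(z;u,y)|\leq K(1+\log a)$ for an absolute constant $K$. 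We then combine this with \cref{lem:roof-function}: $r(\omega)>\log 2$ always, and $r(\omega)\geq\log a$ when $a\geq2$. Hence $1+\log a\leq c\,r(\omega)$ with $c=1+1/\log2$ in all cases (for $a=1$ use $1<r/\log 2$; for $a\ge 2$ use $1+\log a\le r/\log2+r$).

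For the estimate on $B$, every pole is real: for $\epsilon=1$ the poles are $m+u\geq1$ and $-(a-m+y)\leq -y<0$, and for $\epsilon=0$ they are their reciprocals, lying in $(0,1)$ and $(-\infty,-1)\cup[-1/y,\ldots]$. Since $\Im z\geq\sqrt3/2$ on $\calA$, each single term satisfies $|z-p|^{-1}\leq 2/\sqrt3$. For $\epsilon=1$ we also have $|z-p|\geq |p|-1$, so the $m$th terms are $O(1/m)$ and $O(1/(a-m+1))$. Summing over $1\leq m\leq a$ gives $O(\sum_{m\le a}1/m)=O(1+\log a)$. For $\epsilon=0$ this harmonic-type bound fails, since the poles $(m+u)^{-1}$ accumulate at $0$. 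Here we group the two terms differently: $\frac{1}{z-p}-\frac{1}{z}=\frac{p}{z(z-p)}$ has size $O(|p|)=O(1/m)$. Adding and subtracting $1/z$ in each term, the $1/z$ contributions cancel between the two halves of each summand, since $B^{(0)}$ is a difference of equally many terms. This again yields $O(1+\log a)$. This regrouping for $\epsilon=0$ is the one point needing care; it is the main obstacle, although it is elementary.

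For the second claim, that $\Phi_f/r\in C_b(\Sigma)$, boundedness follows from the bound above with $C_f=\frac{\pi}{3}M_fKc$, and the fact that $r$ is bounded below. For continuity, $\Sigma$ is covered by the open sets $I_a\times I\times\{\epsilon\}$, on which $a$ and $\epsilon$ are constant. On each such set, $u=1/x-a$ and $y$ vary continuously, and $B_a^{(\epsilon)}(z;u,y)$ is a finite sum, jointly continuous in $(z,u,y)$ on $\calA\times(0,1)^2$ because the poles avoid $\calA$. Dominated convergence over the compact arc then gives continuity of $\Phi_f$. Since $r$ is continuous and positive by \cref{lem:roof-function}, $\Phi_f/r$ is continuous.
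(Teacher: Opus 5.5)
Your proposal is correct and follows the paper's proof essentially step by step. The ingredients match: the $\pm 1/z$ regrouping for $\epsilon=0$, the harmonic-sum bound $\sup_{z\in\calA}|B_a^{(\epsilon)}(z;u,y)|\ll 1+\log a$, the comparison with $r>\log 2$ and $r\geq\log a$ from \cref{lem:roof-function}, and continuity checked on each open set $I_a\times I\times\{\epsilon\}$.

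There is one small slip, easily repaired. In the $\epsilon=0$ case at $m=a$, the pole $-1/y$ is unbounded, so the bound $O(|p|)$ does not apply there. Separately, for $m<a$ the pole $-(a-m+y)^{-1}$ lies in $(-1,0)$, not below $-1$ as you wrote. The $m=a$ term is nevertheless $O(1)$, by your own estimate $|z-p|^{-1}\leq 2/\sqrt3$ together with $|1/z|=1$; this plays the role of the paper's bound $|1+yz|^{-1}\leq 2$.
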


\begin{proof}
	Fix $\omega=(x,y,\epsilon)$, put $a=a(\omega)$ and $u=u(\omega)$, and note that $|\Re z|\leq1/2$, $\Im z\geq\sqrt3/2$, and $|z|=1$ on $\calA$. Uniformly in $u,y\in(0,1)$,
	\begin{align*}
		\left|\frac1{z-(m+u)}\right|&\leq\frac1{m-1/2},\\
		\left|\frac1{z+(a-m+y)}\right|&\leq
		\begin{cases}2/\sqrt3,&m=a,\\(a-m-1/2)^{-1},&m<a,
		\end{cases}\\
		\frac1{z-(m+u)^{-1}}-\frac1{z+(a-m+y)^{-1}}
		&=\left(\frac1{z-(m+u)^{-1}}-\frac1z\right) +\left(\frac1z-\frac1{z+(a-m+y)^{-1}}\right).
	\end{align*}
	The two terms on the last line are respectively $O(m^{-1})$ and $O((a-m)^{-1})$ when $m<a$. At $m=a$ the latter has absolute value $|1+yz|^{-1}\leq2$. Summing gives, for both $\epsilon=0,1$,
	\begin{align}\label{eq:uniform-estimate}
		\sup_{z\in\calA}|B_a^{(\epsilon)}(z;u,y)|\ll1+\log a.
	\end{align}
	Since $\sup_{z\in \calA}|f(z)|<\infty$, it follows that
	\[
		|\Phi_f(\omega)|\ll_f 1+\log a(\omega).
	\]
	By \cref{lem:roof-function}, $r>\log 2$ and $r(\omega)\geq\log a(\omega)$ for $a(\omega)\geq2$, proving the asserted bound. On each $I_a\times I\times\Z/2\Z$, both $a(\omega)$ and the finite sum are fixed in form and $u(\omega)=Tx$ is continuous. Hence $\Phi_f$ is continuous. Positivity and continuity of $r$ now give $\Phi_f/r\in C_b(\Sigma)$.
\end{proof}

Fix $\rho(t)=6t(1-t)$, so that $\rho(0)=\rho(1)=0$ and $\int_0^1\rho(t)\,\dd t=1$.

\begin{definition}\label{def:Jf}
	For $f\in M_0^!(\Gamma)$, define $\calJ_f \colon X_{\susp} \to \C$ by
	\[
		\calJ_f([\omega,t]) \coloneqq \frac{\Phi_f(\omega)}{r(\omega)} \rho\left(\frac{t}{r(\omega)}\right)
	\]
	for $0\leq t\leq r(\omega)$.
\end{definition}

\begin{proposition}\label{prop:Jf}
	The function $\calJ_f$ belongs to $C_b(X_{\susp})$. Moreover, for every
	$C\in\calP$,
	\[
		\int_0^{\ell(C)} \calJ_f(\psi_s z)\,\dd s = I_f(\gamma_C)
	\]
	for any $z\in C$. Consequently, $m_C(\calJ_f)=\val_f(\gamma_C)$.
\end{proposition}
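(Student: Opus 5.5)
The argument has three parts: well-definedness together with continuity and boundedness, the orbit integral identity, and the resulting formula for $m_C(\calJ_f)$.

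\textbf{Well-definedness, boundedness and continuity.} Every point of $X_{\susp}$ has a unique representative $[\omega,t]$ with $0\le t<r(\omega)$. The only other representations with $t\in[0,r(\omega)]$ arise at the seam, where $[\omega,r(\omega)]=[\sigma\omega,0]$. There $\rho(1)=\rho(0)=0$, so both candidate values vanish and $\calJ_f$ is well defined. Boundedness follows from \cref{lem:Phi-bound}: $|\calJ_f|\le \sup|\Phi_f/r|\cdot\sup_{[0,1]}\rho<\infty$.

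For continuity, I will use that the quotient map $\pi\colon\Sigma\times\R\to X_{\susp}$ is an open quotient map (proof of \cref{lem:suspension-flow}). It therefore suffices to show that $\widetilde{\calJ}(\omega,t)\coloneqq \calJ_f(\pi(\omega,t))$ is continuous on $\Sigma\times\R$. Extend $\rho$ by zero outside $[0,1]$; this extension $\widetilde\rho$ is continuous because $\rho(0)=\rho(1)=0$. I claim that
\[
\widetilde{\calJ}(\omega,t)=\sum_{n\in\Z}\frac{\Phi_f(\sigma^n\omega)}{r(\sigma^n\omega)}\widetilde\rho\Bigl(\frac{t-S_n(\omega)}{r(\sigma^n\omega)}\Bigr),
\]
where $S_n$ denotes the Birkhoff sums of $r$, with $S_0=0$, $S_n=\sum_{j<n}r\circ\sigma^j$ for $n>0$, and the analogous negative sums for $n<0$. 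Each summand is continuous, because $\sigma$, $r$ and $\Phi_f/r$ are continuous. Since $r>\log 2$, the sum is locally finite: near a given $(\omega_0,t_0)$ only finitely many $n$ contribute, with at most two nonzero at a time. Hence $\widetilde{\calJ}$ is continuous. This step is the main technical point, and it is routine once the zero-extension trick is used.

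\textbf{The orbit integral.} Take $z=[\omega,0]\in C$ with $N$ the least $\sigma$-period, so that $\ell(C)=\sum_{j<N}r(\sigma^j\omega)$. Split $[0,\ell(C)]$ into the consecutive intervals $[S_j,S_{j+1}]$. On the $j$th interval we have $\psi_s z=[\sigma^j\omega,s-S_j]$. Substituting $t=(s-S_j)/r(\sigma^j\omega)$ turns the integral over this interval into
\[
\int_0^1\Phi_f(\sigma^j\omega)\rho(t)\,\dd t=\Phi_f(\sigma^j\omega).
\]
Summing over $j$ and applying \cref{prop:BHI-suspension} gives $\sum_{\omega'\in O_C}\Phi_f(\omega')=I_f(\gamma_C)$. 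The integral does not depend on the choice of $z\in C$, since the integrand is $\ell(C)$-periodic along the orbit.

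\textbf{The formula for $m_C(\calJ_f)$.} Dividing by $\ell(C)=\ell(\gamma_C)=I_1(\gamma_C)$, which holds by \cref{lem:length-preserving}, yields $m_C(\calJ_f)=\val_f(\gamma_C)$.
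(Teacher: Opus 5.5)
Your proposal is correct and follows the paper's proof. Boundedness comes from \cref{lem:Phi-bound}, integrating column by column gives $\Phi_f(\sigma^j\omega)$ on each column, the columns are summed via \cref{prop:BHI-suspension}, and the result is divided by $\ell(C)=\ell(\gamma_C)=I_1(\gamma_C)$. Your locally finite sum on $\Sigma\times\R$ with the zero-extended $\rho$ is a more explicit version of the paper's observation that $\calJ_f$ tends to $0$ on both sides of each seam $[\omega,r(\omega)]=[\sigma\omega,0]$.
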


\begin{proof}
	By \cref{lem:Phi-bound}, $\Phi_f/r$ is bounded and continuous. Hence $\calJ_f$ is bounded and continuous on the interior of each suspension column. Since $\rho(0)=\rho(1)=0$, its values tend to $0$ on both sides of each identification $[\omega,r(\omega)]=[\sigma\omega,0]$, so $\calJ_f$ is continuous on all of $X_{\susp}$.

	On a single suspension column, we have
	\[
		\int_0^{r(\omega)} \calJ_f([\omega,s])\,\dd s = \Phi_f(\omega).
	\]
	Decomposing $C$ into the columns over $O_C$ and using \cref{prop:BHI-suspension} gives
	\[
		\int_0^{\ell(C)} \calJ_f(\psi_s z)\,\dd s = \sum_{\omega\in O_C}\Phi_f(\omega) = I_f(\gamma_C).
	\]
	Dividing by $\ell(C)=\ell(\gamma_C)$ gives $m_C(\calJ_f) = I_f(\gamma_C)/\ell(\gamma_C) = \val_f(\gamma_C)$.
\end{proof}

\subsection{Space average of $\calJ_f$}

We identify its space average with the constant term $\alpha_f$ of $fE_2$.

\begin{proposition}\label{prop:Jf-mean}
	For every $f\in M_0^!(\Gamma)$,
	\[
		\int_{X_{\susp}} \calJ_f\,\dd m_{\susp} = \alpha_f.
	\]
\end{proposition}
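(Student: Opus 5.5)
The proof will compute the space average directly, reduce it to an explicit kernel integrated against $f$ on $\calA$, and then identify that integral with the constant term of $fE_2$ by a contour argument.

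\textbf{Reduction to a kernel.} By \cref{def:m-susp} and $\int_0^1\rho=1$, we have $\int_{X_{\susp}}\calJ_f\,\dd m_{\susp}=\overline r^{-1}\int_\Sigma\Phi_f\,\dd m_\Sigma$. By \cref{lem:Phi-bound}, $|\Phi_f|\le C_f r$, and by \cref{lem:roof-function}, $r\in L^1(m_\Sigma)$. Moreover, the bound \eqref{eq:uniform-estimate} holds uniformly in $z\in\calA$. Hence Fubini applies and lets us pull out $\int_{\calA}f(z)\,\dd z$. This gives
\[
\int_\Sigma\Phi_f\,\dd m_\Sigma=\int_{\calA}f(z)K(z)\,\dd z,\qquad
K(z)\coloneqq\frac{1}{2\log2}\sum_{\epsilon\in\{0,1\}}\sum_{a\ge1}\int_{I_a\times I}B^{(\epsilon)}_{a}(z;Tx,y)\frac{\dd x\,\dd y}{(1+xy)^2}.
\]

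\textbf{Computing $K$ in endpoint coordinates.} On $I_a\times I$, put $x=1/(a+u)$. Then $\dd x\,\dd y/(1+xy)^2=\dd u\,\dd y/(a+u+y)^2$. For the $m$-th summand of $B^{(1)}_a$, set $w=m+u$ and $w'=-(a-m+y)$, so that $a+u+y=w-w'$. As $a\ge1$ and $1\le m\le a$ vary (with $k=a-m\ge0$), the pieces $(w,w')$ tile $(1,\infty)\times(-\infty,0)$ exactly, up to a null set. Hence the $\epsilon=1$ contribution becomes
\[
\frac{1}{2\log 2}\int_1^\infty\!\!\int_{-\infty}^0\Bigl(\frac1{z-w}-\frac1{z-w'}\Bigr)\frac{\dd w\,\dd w'}{(w-w')^2},
\]
which is the $\PSL_2(\R)$-invariant geodesic measure on endpoint pairs. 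Carrying out one variable (for instance $\int_{-\infty}^0\dd w'/(w-w')^2=1/w$) leaves elementary integrals of the form $\int_1^\infty\dd w/(w(z-w))$ and $\int_{-\infty}^0\dd w'/((z-w')(1-w'))$. These produce explicit logarithms in $z$. The $\epsilon=0$ term is obtained the same way, with endpoints $1/w$ and $1/w'$; this is the image of the $\epsilon=1$ configuration under $\matS$, up to orientation. The upshot is a closed form for $K(z)$ as a sum of rational multiples of $\log z$, $\log(z-1)$, $\log(z+1)$ divided by $z$ or $z\pm1$.

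\textbf{Identifying the constant.} Write $\alpha_f=\int_{-1/2+iY}^{1/2+iY}f(\tau)E_2(\tau)\,\dd\tau$ for large $Y$. Deform the contour to the boundary of the standard fundamental domain. The vertical sides cancel by $\matT$-invariance of $fE_2$. The two halves of the arc are exchanged by $\matS$, and the quasimodularity $E_2(-1/z)=z^2E_2(z)+\frac{6z}{\pi i}$ leaves only the anomaly term. This gives
\[
\alpha_f=c\int_{\calA}f(z)\,\frac{\dd z}{z}
\]
for an explicit constant $c$ (of the form $\frac{6}{\pi i}$ up to sign and orientation). It then suffices to show that
\[
\int_{\calA}f(z)K(z)\,\dd z=\frac{\overline r}{c}\int_{\calA}f(z)\,\frac{\dd z}{z}
\qquad\text{for every }f\in M_0^!(\Gamma).
\]
For this, I would symmetrize $K$ under $z\mapsto-1/z$, which maps $\calA$ to itself and fixes $f$. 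Concretely, replace $K(z)\,\dd z$ by $\tfrac12\bigl(K(z)\,\dd z+\matS^*(K\,\dd z)\bigr)$. The claim is that the logarithmic terms then combine into $\frac{\pi^2/(6\log2)}{c}\,\frac{\dd z}{z}$ plus an exact form $\dd G$ with $G(e^{\pi i/3})=G(e^{2\pi i/3})$ after the $\matS$-pairing. The residual $f\,\dd G$ contributions should then cancel, using $f(e^{2\pi i/3})=f(e^{\pi i/3})$ (by $\matT$-invariance) and integration by parts against $f'$ combined with the $\matS$-invariance of $f$.

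\textbf{Main obstacle.} I expect the last step to be the hardest: matching the explicit logarithmic kernel with $c^{-1}\overline r/z$ modulo terms that integrate to zero against every $\Gamma$-invariant $f$. Keeping track of the branches of $\log$ on $\calA$ and the orientation conventions, and checking that the dilogarithm-type constant that appears is exactly $\pi^2/6$ (so that it cancels $\overline r\log2$), is where errors are most likely. As a sanity check, I would first test the identity with $f=1$ (where $\alpha_1=1$), which pins down the normalization.
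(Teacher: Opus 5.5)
Your outer structure matches the paper's: the Fubini reduction to $\overline r^{-1}\int_{\calA}f(z)K(z)\,\dd z$, the endpoint coordinates with the invariant measure, and the identification $\alpha_f=\frac{3}{\pi i}\int_{\calA}f(z)\,\frac{\dd z}{z}$ from the $E_2$ anomaly. (For $\calA$ oriented as in the paper the constant is $c=3/(\pi i)$, and your target should read $K(z)=\overline r\,c/z$, not $\overline r/(cz)$.) But the one identity the proposition rests on, the evaluation of $K$, is never carried out, and the shape you predict for it is wrong.

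The missing observation is that the two parity sheets fit together. Write $p$ for the attracting endpoint and $q$ for minus the repelling one. The $\epsilon=1$ pieces tile $\{p>1,\ q>0\}$. The $\epsilon=0$ pieces, with $p=(m+u)^{-1}$, $q=(a-m+y)^{-1}$ and the same measure $\dd p\,\dd q/(p+q)^2$, tile $\{0<p<1,\ q>0\}$. The full quadrant is dilation invariant, so keep $\frac1{z-p}-\frac1{z+q}=\frac{p+q}{(z-p)(z+q)}$ together and substitute $q=tp$:
\[
\int_0^\infty\!\!\int_0^\infty\frac{\dd p\,\dd q}{(z-p)(z+q)(p+q)}=\int_0^\infty\frac{\dd t}{1+t}\int_0^\infty\frac{\dd p}{(z-p)(z+tp)}=\frac1z\int_0^\infty\frac{\log t-\pi i}{(1+t)^2}\,\dd t=-\frac{\pi i}{z}.
\]
Hence $K(z)=-\pi i/(2\log 2\,z)$ exactly, and $\overline r^{-1}\int_{\calA}fK\,\dd z=\frac{3}{\pi}\int_{\pi/3}^{2\pi/3}f(e^{it})\,\dd t=\alpha_f$ follows at once.

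Your sheet-by-sheet route would show the same thing. Before the factor $1/(2\log 2)$, the $\epsilon=1$ sheet gives $\frac{\log(z-1)-\pi i}{z}-\frac{\log z}{z-1}$ and the $\epsilon=0$ sheet gives $\frac{\log z}{z-1}-\frac{\log(z-1)}{z}$, so the logarithms cancel pointwise. No dilogarithm enters: the $\pi^2$ in $\overline r$ is matched by the $\pi i$ from the branch of $\log(z-p)$ as $p\to\infty$ and the $\pi i$ in the $E_2$ anomaly.

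The finishing mechanism you propose instead fails as written. $\matS$ maps $\calA$ to itself with reversed orientation, so $\int_{\calA}f\,\matS^*(K\,\dd z)=-\int_{\calA}fK\,\dd z$ for $\Gamma$-invariant $f$. Thus $\frac12\bigl(K\,\dd z+\matS^*(K\,\dd z)\bigr)$ pairs to zero with every $f$. In fact $\matS^*(\dd z/z)=-\dd z/z$ and the true kernel is a multiple of $\dd z/z$, so this form vanishes identically. It therefore cannot equal a nonzero multiple of $\dd z/z$ plus $\dd G$ with $G(e^{\pi i/3})=G(e^{2\pi i/3})$, as testing with $f=1$ shows.

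Separately, integrating out one variable term by term breaks down on the $\epsilon=0$ sheet. There $\int_0^\infty\dd q/(p+q)^2=1/p$ leaves $\int_0^1\frac{\dd p}{p(z-p)}$, which diverges at $p=0$, and the other term likewise diverges at $q=0$. Only the combined integrand, which is $O\bigl(1/(p+q)\bigr)$ near the origin, is integrable there.
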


\begin{proof}
	We first prove the kernel identity
	\begin{equation}\label{eq:ave-kernel}
		\sum_{a\geq1}\int_0^1\int_0^1
		\frac{B_a^{(0)}(z;u,y)+B_a^{(1)}(z;u,y)}{(a+u+y)^2}\,\dd u\,\dd y
		=-\frac{\pi i}{z}\qquad(z\in\calA).
	\end{equation}
	For fixed $a,m$, set $p=m+u$, $q=a-m+y$ for $B_a^{(1)}$, and $p=(m+u)^{-1}$, $q=(a-m+y)^{-1}$ for $B_a^{(0)}$. The resulting rectangles partition the displayed regions up to their boundaries, and the changes of variables give
	\begin{align*}
		\sum_{a\geq1}\int_0^1\int_0^1\frac{B_a^{(1)}(z;u,y)}{(a+u+y)^2}\,\dd u\,\dd y
		&=\int_{\substack{p>1\\q>0}}\left(\frac1{z-p}-\frac1{z+q}\right)\frac{1}{(p+q)^2}\,\dd p\,\dd q,\\
		\sum_{a\geq1}\int_0^1\int_0^1\frac{B_a^{(0)}(z;u,y)}{(a+u+y)^2}\,\dd u\,\dd y
		&=\int_{\substack{0<p<1\\q>0}}\left(\frac1{z-p}-\frac1{z+q}\right)\frac{1}{(p+q)^2}\,\dd p\,\dd q.
	\end{align*}
	Combining them and then setting $q=tp$ gives
	\begin{align*}
		\int_0^\infty\int_0^\infty\frac{1}{(z-p)(z+q)(p+q)}\,\dd p\,\dd q
		&=\int_0^\infty\frac{1}{1+t}\,\dd t\int_0^\infty\frac{1}{(z-p)(z+tp)}\,\dd p\\
		&=\frac1z\int_0^\infty\frac{\log t-\pi i}{(1+t)^2}\,\dd t=-\frac{\pi i}{z},
	\end{align*}
	where $\log$ is the principal branch, proving \eqref{eq:ave-kernel}.

	By \cref{def:m-susp} and \cref{def:Jf},
	\[
		\int_{X_{\susp}}\calJ_f\,\dd m_{\susp} = \frac{1}{\overline{r}} \int_\Sigma\Phi_f \,\dd m_\Sigma.
	\]
	Under $x=1/(a+u)$, each parity sheet has
	\[
		\dd m_\Sigma = \frac{1}{2\log 2} \frac{\dd u\,\dd y}{(a+u+y)^2}.
	\]
	The uniform estimate \eqref{eq:uniform-estimate}, together with \cref{lem:roof-function}, justifies Fubini's theorem. Hence \eqref{eq:ave-kernel} gives
	\begin{align*}
		\int_{X_{\susp}}\calJ_f\,\dd m_{\susp} = \frac{1}{\overline{r}}\frac{1}{2\log 2} \int_{\calA} f(z)\left(-\frac{\pi i}{z}\right)\,\dd z
			= \frac{3}{\pi} \int_{\pi/3}^{2\pi/3}f(e^{it})\,\dd t.
	\end{align*}
	As shown in Kaneko--Zagier~\cite[Proposition~3]{KanekoZagier1998}, this quantity is precisely Atkin's inner product $\alpha_f=(f,1)_{\mathrm{At}}$. For completeness, we include the short argument.
	
	Since $f(-1/z)=f(z)$ and the quasimodular transformation law for $E_2$~\cite[Chapter~VII]{Serre1973} gives
	\[
		E_2(-1/z)=z^2E_2(z)+\frac{6z}{\pi i},
	\]
	the orientation-reversing substitution $z\mapsto-1/z$ yields
	\[
		2\int_{\calA}f(z)E_2(z)\,\dd z
		=-\frac6{\pi i}\int_{\calA}\frac{f(z)}{z}\,\dd z
		=-\frac6\pi\int_{\pi/3}^{2\pi/3}f(e^{it})\,\dd t.
	\]
	Cauchy's theorem on the region bounded by the oriented arc $\calA$ and the horizontal segment joining its endpoints, together with the Fourier expansion of $fE_2$, identifies the left-hand side as $-2\alpha_f$. Comparison with the preceding formula proves the claim.
\end{proof}

Combining \cref{thm:pollicott-equidistribution} with \cref{prop:Jf,prop:Jf-mean}, and using \cref{prop:closed-orbit-dictionary,lem:length-preserving}, we obtain
\[
	\lim_{L \to \infty} \frac{1}{\#\Pi(L)}\sum_{[\gamma]\in\Pi(L)}\val_f(\gamma) = \alpha_f.
\]
Thus the average of $\val_f(\gamma)$ tends to $\alpha_f$. In the next section, we combine this equidistribution with the ergodicity of $(\psi_s)_{s\in\R}$ to prove the stronger concentration statement in \cref{thm:main1}.

\section{Proof of \texorpdfstring{\cref{thm:main1}}{Theorem \ref{thm:main1}}}\label{section:proof-1}

Fix $f\in M_0^!(\Gamma)$ and put $g_f=\calJ_f-\alpha_f$. By \cref{prop:Jf,prop:Jf-mean}, $g_f\in C_b(X_{\susp})$ and
\[
	\int_{X_{\susp}}g_f\,\dd m_{\susp}=0.
\]
For $R>0$, put
\[
	(A_Rg_f)(z) \coloneqq \frac{1}{R}\int_0^R g_f(\psi_s z)\,\dd s.
\]
Let $\calE$ be the invariant $\sigma$-algebra, consisting of the measurable $A$ for which $m_{\susp}(\psi_s^{-1}A\mathbin\triangle A)=0$ for every $s \in\R$. Einsiedler--Ward~\cite[Corollary~8.15]{EinsiedlerWard2011}, applied to the real and imaginary parts of $g_f$, gives convergence in $L^1$ to $\mathbb{E}(g_f\mid\calE)$. Ergodicity of $(\psi_s)_{s\in\R}$ (\cref{lem:suspension-ergodic}) makes this conditional expectation constant almost everywhere, and its defining integral identity~\cite[Theorem~5.1]{EinsiedlerWard2011} identifies the constant with $\int_{X_{\susp}} g_f\,\dd m_{\susp}=0$. Hence
\[
	A_Rg_f\longrightarrow0\qquad\text{in }L^1(X_{\susp},m_{\susp}).
\]

For fixed $R$, continuity of the flow gives $A_Rg_f\in C_b(X_{\susp})$, while flow invariance of $m_C$ gives $m_C(g_f)=m_C(A_Rg_f)$ and hence
\[
	|m_C(g_f)| \leq m_C(|A_Rg_f|).
\]
Applying \cref{thm:pollicott-equidistribution} and first letting $L \to\infty$ yields
\begin{align*}
	\limsup_{L \to\infty} \frac{1}{\#\calP(L)} \sum_{C\in\calP(L)}|m_C(g_f)| \leq \int_{X_{\susp}}|A_Rg_f|\,\dd m_{\susp} = \|A_Rg_f\|_{L^1(X_{\susp}, m_{\susp})}.
\end{align*}
Now letting $R\to\infty$ gives
\[
	\lim_{L \to \infty} \frac{1}{\#\calP(L)} \sum_{C\in\calP(L)}|m_C(g_f)| = 0.
\]

By \cref{prop:closed-orbit-dictionary,lem:length-preserving,prop:Jf}, the correspondence $C \leftrightarrow [\gamma_C]$ gives $\#\calP(L) = \#\Pi(L)$ and $m_C(g_f) = \val_f(\gamma_C)-\alpha_f$. Therefore
\[
	\lim_{L \to \infty} \frac{1}{\#\Pi(L)} \sum_{[\gamma]\in\Pi(L)} |\val_f(\gamma)-\alpha_f| = 0.
\]
Finally, for every $\varepsilon > 0$,
\[
	\frac{\#\{[\gamma]\in\Pi(L): |\val_f(\gamma)-\alpha_f| \geq \varepsilon\}}{\#\Pi(L)} \leq \frac{1}{\varepsilon\#\Pi(L)} \sum_{[\gamma]\in\Pi(L)} |\val_f(\gamma)-\alpha_f|,
\]
and the right-hand side tends to $0$. Taking complements gives the assertion of \cref{thm:main1}.

\section{Concentration of nonreal val values}\label{section:proof-2}

We first verify that the formulation of the Kaneko--Shigeki conjecture given in the introduction agrees with their original formulation in terms of real quadratic irrationalities. Put
\[
	W \coloneqq \pmat{1 & 0 \\ 0 & -1} \in \PGL_2(\Z).
\]
Thus, for $\gamma = \smat{a & b \\ c & d} \in \Gamma$, we have $\gamma^\ast = W^{-1} \gamma W = \smat{a & -b \\ -c & d}$. Let $w_\gamma$ and $\widetilde{w}_\gamma$ denote the attracting and repelling fixed points of $\gamma$, respectively. For real quadratic irrationalities $u$ and $v$, we write $u\sim_\Gamma v$ if they are $\Gamma$-equivalent.

\begin{lemma}\label{lem:quadratic-matrix-equivalence}
	Let $\gamma \in \Gamma$ be hyperbolic. Then
	\begin{align}
		[\gamma]=[\gamma^\ast] &\quad \Longleftrightarrow \quad w_\gamma\sim_\Gamma -w_\gamma, \label{eq:star-fixed-equivalence}\\
		[\gamma]=[(\gamma^\ast)^{-1}] &\quad \Longleftrightarrow \quad w_\gamma\sim_\Gamma -\widetilde{w}_\gamma. \label{eq:star-inverse-fixed-equivalence}
	\end{align}
\end{lemma}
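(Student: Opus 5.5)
The plan is to translate both sides of each equivalence into statements about fixed points, using the action of $W$ on $\mathbb{P}^1(\R)$ and the structure of stabilizers of real quadratic irrationalities in $\Gamma$.

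\emph{Step 1 (fixed points of $\gamma^\ast$ and $(\gamma^\ast)^{-1}$).} As a M\"obius transformation, $W$ acts by $z\mapsto -z$. Since $\gamma^\ast=W^{-1}\gamma W$, its fixed points are $W^{-1}w_\gamma=-w_\gamma$ and $-\widetilde w_\gamma$. Conjugation by the homeomorphism $W$ of $\mathbb{P}^1(\R)$ preserves the attracting/repelling dynamics. Hence
\[
	w_{\gamma^\ast}=-w_\gamma,\qquad \widetilde w_{\gamma^\ast}=-\widetilde w_\gamma .
\]
Inversion swaps attracting and repelling fixed points, so $w_{(\gamma^\ast)^{-1}}=-\widetilde w_\gamma$. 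Also $\tr\gamma^\ast=\tr\gamma$ and $\tr(\gamma^\ast)^{-1}=\tr\gamma$ (up to the sign ambiguity in $\PSL_2$, so $|\tr|$ is preserved).

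\emph{Step 2 (general criterion).} I will prove: for hyperbolic $\gamma,\delta\in\Gamma$ with $|\tr\gamma|=|\tr\delta|$, we have $[\gamma]=[\delta]$ if and only if $w_\gamma\sim_\Gamma w_\delta$. Applying this with $\delta=\gamma^\ast$ and $\delta=(\gamma^\ast)^{-1}$, together with Step 1, gives \eqref{eq:star-fixed-equivalence} and \eqref{eq:star-inverse-fixed-equivalence}.

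For the direction ($\Rightarrow$): if $\gamma=g\delta g^{-1}$ with $g\in\Gamma$, then $g$ maps the attracting fixed point of $\delta$ to that of $\gamma$, so $w_\gamma=g w_\delta$.

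For the direction ($\Leftarrow$), the main point is the following. Suppose $w_\gamma=g w_\delta$. Then $\delta'\coloneqq g\delta g^{-1}$ is hyperbolic with attracting fixed point $w_\gamma$. Consequently its repelling fixed point is the Galois conjugate of $w_\gamma$, which equals $\widetilde w_\gamma$, since both are fixed points of an integral matrix. Thus $\gamma$ and $\delta'$ both lie in the stabilizer $\Gamma_{w_\gamma}$. This stabilizer is infinite cyclic, generated by a primitive hyperbolic $\varepsilon$ (a unit group of the associated order, modulo $\pm1$). Choose $\varepsilon$ so that its attracting fixed point is $w_\gamma$. Then $\gamma=\varepsilon^n$ and $\delta'=\varepsilon^m$ with $n,m\geq1$, since both have attracting point $w_\gamma$. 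If $\varepsilon$ has eigenvalues $\lambda^{\pm1}$ with $\lambda>1$, then $|\tr\varepsilon^k|=\lambda^k+\lambda^{-k}$ is strictly increasing in $k\geq1$. Hence the equality $|\tr\gamma|=|\tr\delta'|=|\tr\delta|$ forces $n=m$, so $\gamma=\delta'$ is conjugate to $\delta$.

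\emph{Main obstacle.} The only delicate step is the stabilizer argument in ($\Leftarrow$). I expect to cite the standard fact that $\Gamma_{w}$ is infinite cyclic for a real quadratic irrationality $w$. I must also be careful that orientation (attracting versus repelling) is tracked correctly through the orientation-reversing conjugation by $W$. Step 1 handles that directly via the dynamics: $W$ is a homeomorphism of $\mathbb{P}^1(\R)$, and conjugating by any homeomorphism preserves which fixed point attracts.
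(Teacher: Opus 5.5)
Your proof is correct, but your converse direction takes a different route from the paper's.

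\textbf{Your route.} You first prove a general criterion: two hyperbolic elements with equal $|\tr|$ are $\Gamma$-conjugate if and only if their attracting fixed points are $\Gamma$-equivalent. You then get the converse by placing $\gamma$ and $g\delta g^{-1}$ in the stabilizer $\Gamma_{w_\gamma}$. Using that this stabilizer is infinite cyclic, you match exponents via the strict monotonicity of $\lambda^k+\lambda^{-k}$.

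\textbf{The paper's route.} The paper never uses the structure of the stabilizer. Given $\delta$ with $\delta^{-1}w_\gamma=-w_\gamma$, Galois conjugation gives $\delta^{-1}\widetilde w_\gamma=-\widetilde w_\gamma$. Hence the positive-trace lifts of $\delta^{-1}\gamma\delta$ and $W^{-1}\gamma W$ have the same $\lambda$- and $\lambda^{-1}$-eigenspaces and the same eigenvalues, so they coincide. Thus the very same $\delta$ realizes the conjugacy.

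\textbf{Simplifying your Step 2.} The key fact here is that a positive-trace hyperbolic element of $\SL_2(\R)$ is determined by its trace together with its ordered pair of attracting and repelling fixed points. With this, you could drop both the cyclic-stabilizer and monotonicity steps. Once $\gamma$ and $\delta'=g\delta g^{-1}$ share attracting and repelling points and have the same $|\tr|$, they are already equal.

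\textbf{Comparison.} Your approach gives a reusable conjugacy criterion and cleanly separates the orientation bookkeeping (Step 1) from the arithmetic. The paper's approach is shorter and more elementary, and it identifies the conjugating element explicitly.
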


\begin{proof}
	Since $W$ acts on $\mathbb{P}^1(\R)$ by $x\mapsto -x$, the ordered pairs of attracting and repelling fixed points of $\gamma^\ast$ and $(\gamma^\ast)^{-1}$ are $(-w_\gamma,-\widetilde{w}_\gamma)$ and $(-\widetilde{w}_\gamma,-w_\gamma)$, respectively.

	If $\delta^{-1} \gamma \delta = \gamma^\ast$, comparison of attracting fixed points gives $\delta^{-1} w_\gamma = -w_\gamma$. Conversely, suppose that $\delta^{-1} w_\gamma = -w_\gamma$. Galois conjugation then gives $\delta^{-1}\widetilde{w}_\gamma = -\widetilde{w}_\gamma$. Choose a lift $\widetilde{\gamma}\in\SL_2(\R)$ with eigenvalues $\lambda,\lambda^{-1}$, where $\lambda>1$, and a lift $\widetilde{\delta}$ of $\delta$. The two conjugates $\widetilde{\delta}^{-1}\widetilde{\gamma}\widetilde{\delta}$ and $W^{-1}\widetilde{\gamma}W$ have the same $\lambda$- and $\lambda^{-1}$-eigenspaces by the preceding identities, and act on them by the same eigenvalues. Hence they agree on a basis of $\R^2$, so $\delta^{-1}\gamma\delta=\gamma^\ast$. This proves \eqref{eq:star-fixed-equivalence}.

	The same argument proves \eqref{eq:star-inverse-fixed-equivalence} after interchanging the two fixed points.
\end{proof}

In what follows, put $w=w_\gamma$, $\widetilde{w}=\widetilde{w}_\gamma$, and $\Lambda_w =\Z w+\Z$. Define
\[
	\mathcal O_w
	\coloneqq \{\alpha\in \Q(w): \alpha \Lambda_w \subseteq \Lambda_w\}.
\]
This is the multiplier order of $\Lambda_w$. As recalled in the introduction, Kaneko's original definition agrees with our cycle-integral normalization, so that $\val(w)=\val(\gamma)$.

We first record the equivalence
\[
    w\sim_\Gamma-w
    \quad\Longleftrightarrow\quad
    \mathcal O_w^\times\text{ contains a unit of norm }-1.
\]
Indeed, if $\alpha\in\mathcal O_w^\times$ has norm $-1$, write
\[
	\alpha w=aw+b,\qquad \alpha=cw+d.
\]
On the basis $(w,1)$, multiplication by $\alpha$ has matrix $\smat{a&c\\b&d}$, so $ad-bc=-1$. Hence
\[
	\delta=\pmat{-a&-b\\c&d}\in\SL_2(\Z),
	\qquad \delta w=\frac{-aw-b}{cw+d}=-w.
\]
Conversely, if $\delta=\smat{A&B\\C&D}\in\SL_2(\Z)$ satisfies $\delta w=-w$, put $\alpha=Cw+D$. Then $\alpha w=-Aw-B$, so multiplication by $\alpha$ on $\Lambda_w$ has matrix $\smat{-A&C\\-B&D}$, whose determinant is $-AD+BC=-1$. Thus it is an automorphism of $\Lambda_w$, and $\alpha\in\mathcal O_w^\times$ has norm $-1$.

Kaneko and Shigeki~\cite{KanekoShigeki2010} distinguish two cases according to the norm of the fundamental unit of $\mathcal O_w$. If it has norm $-1$, the equivalence above gives $w\sim_\Gamma-w$. By \cref{lem:quadratic-matrix-equivalence} and the symmetry \eqref{eq:Kaneko-symmetry}, this implies that $\val(w)$ is real. If the fundamental unit has norm $1$, they conjecture that
\[
	\val(w)\in\R \quad\Longleftrightarrow\quad w\sim_\Gamma-\widetilde w.
\]
Thus their conjecture may be written uniformly as
\[
	\val(w)\in\R \quad\Longleftrightarrow\quad w\sim_\Gamma -w \ \text{or}\ w\sim_\Gamma -\widetilde{w}.
\]
By \cref{lem:quadratic-matrix-equivalence}, this is precisely \cref{conj:Kaneko-Shigeki}.

We next estimate the number of classes that must be removed. Following Sarnak~\cite{Sarnak2007}, define involutions on $\Pi$ by
\[
	\phi_R([\gamma])\coloneqq[\gamma^{-1}], \qquad \phi_W([\gamma])\coloneqq[\gamma^\ast], \qquad \phi_A\coloneqq\phi_R \circ \phi_W.
\]
Thus $\phi_A([\gamma])=[(\gamma^\ast)^{-1}]$.

\begin{proposition}\label{prop:real-density-zero-sarnak}
	Assume \cref{conj:Kaneko-Shigeki}. Then
	\[
		\lim_{L \to \infty} \frac{\#\{[\gamma]\in\Pi(L):\val(\gamma)\in\R\}}{\#\Pi(L)} = 0.
	\]
	Consequently, $\#\Pi_{\mathrm{nr}}(L) \sim \#\Pi(L)$.
\end{proposition}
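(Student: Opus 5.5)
Under \cref{conj:Kaneko-Shigeki}, the classes $[\gamma]\in\Pi(L)$ with $\val(\gamma)\in\R$ are exactly the fixed points in $\Pi(L)$ of $\phi_W$ or of $\phi_A$. It therefore suffices to show that each involution has $o(\#\Pi(L))$ fixed points in $\Pi(L)$. The bound on the union is then immediate. The consequence $\#\Pi_{\mathrm{nr}}(L)\sim\#\Pi(L)$ follows by subtracting the real classes.

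Both involutions preserve length, since $W$ is an isometry of $\bbH$ and inversion reverses orientation. Hence they act on $\Pi(L)$, and the quantities to bound are
\[
	N_W(L)\coloneqq\#\{[\gamma]\in\Pi(L):\phi_W[\gamma]=[\gamma]\},
	\qquad
	N_A(L)\coloneqq\#\{[\gamma]\in\Pi(L):\phi_A[\gamma]=[\gamma]\}.
\]
Sarnak~\cite{Sarnak2007} computes these counts precisely. He shows that the $\phi_W$-fixed (``inert''/reciprocal-type) and $\phi_A$-fixed (``ambiguous'') classes have counting functions of order $e^{L/2}$ times at most a power of $L$; for instance, the reciprocal classes grow like $c\,L\,e^{L/2}$. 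I will quote these asymptotics directly.

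If one prefers a self-contained argument, a fixed class can be parametrized in a way that halves the exponent. For $\phi_W$, the relation $\delta^{-1}\gamma\delta=\gamma^\ast$ means $\gamma$ is a product of two involutions of $\PGL_2(\Z)$ of determinant $-1$ (namely $W\delta^{-1}$-type reflections). Equivalently, in the continued-fraction model of \cref{prop:closed-orbit-dictionary}, the cyclic run word is invariant under a digit reversal or palindromic symmetry. Such a word of length $\ell$ is determined by roughly half its digits, whose contribution to the length is roughly $\ell/2$. Counting these half-words via the prime geodesic bound for words of length $\le L/2+O(1)$ gives $O(L e^{L/2})$. The same argument works for $\phi_A$.

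Finally, comparing with $\#\Pi(L)\sim\mathrm{Li}(e^L)\asymp e^L/L$, both ratios are $O(L^2e^{-L/2})\to0$. The main obstacle is justifying the $e^{L/2}$-type bound rigorously, including the correct handling of the boundary words and of the symmetric centers in the palindromic decomposition. I would rely on Sarnak's stated asymptotics rather than redo this.
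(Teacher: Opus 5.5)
Your proposal is correct and follows the paper's proof. Under the conjecture the real classes are $\phi_W$- or $\phi_A$-fixed, and Sarnak's counts $N_W(X)\sim X/(2\log X)$ and $N_A(X)\ll X(\log X)^2$ are $o(\#\Pi(L))$ once evaluated at the trace $X_L=2\cosh(L/2)\asymp e^{L/2}$; the paper writes $\#\Pi(L)=N(X_L)$ rather than working in $L$ directly.

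A few minor slips do not affect the conclusion. The $\phi_A$ term gives a ratio $O(L^3e^{-L/2})$, not $O(L^2e^{-L/2})$. The reciprocal ($\phi_R$-fixed) classes you mention are not the ones being counted. In your optional sketch, the $\phi_W$-symmetry is an odd cyclic shift of the digit cycle (the parity flip $\matT\leftrightarrow\matV$), not a palindrome.
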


\begin{proof}
	For $[\gamma]\in\Pi$, put $t(\gamma)\coloneqq \left|\tr\widetilde{\gamma}\right|$, where $\widetilde{\gamma}\in\SL_2(\Z)$ is either lift of $\gamma$. Let $N(X)$ denote the number of classes in $\Pi$ with $t(\gamma)\leq X$, and let $N_W(X)$ and $N_A(X)$ denote the numbers among them fixed by $\phi_W$ and $\phi_A$, respectively. Sarnak~\cite[Theorem~2, (11), (12), and~(14)]{Sarnak2007} proves
	\[
		N(X)\sim\frac{X^2}{2\log X}, \qquad N_W(X)\sim\frac{X}{2\log X},\qquad N_A(X)\ll X(\log X)^2.
	\]

	Put $X_L \coloneqq2\cosh(L/2)$. Since
	\[
		t(\gamma)=2\cosh\frac{\ell(C_\gamma)}{2},
	\]
	we have $\#\Pi(L)=N(X_L)$. By \cref{conj:Kaneko-Shigeki}, every class in $\Pi(L)$ for which $\val(\gamma)$ is real is fixed by either $\phi_W$ or $\phi_A$. Hence
	\[
		\frac{\#\{[\gamma]\in\Pi(L):\val(\gamma)\in\R\}}{\#\Pi(L)} \leq \frac{N_W(X_L)+N_A(X_L)}{N(X_L)} \to 0.
	\]
	The final assertion follows immediately by taking complements in $\Pi(L)$.
\end{proof}

Note that Sarnak gives
\[
	N_A(X)\sim \frac{97}{8\pi^2}X(\log X)^2,
\]
but Parkkonen--Paulin~\cite[Theorem~14]{ParkkonenPaulin2024pre} point out that Sarnak's argument contains an error. After converting from length to trace, their result gives instead
\[
	N_A(X)\sim \frac{3}{\pi^2}X(\log X)^2.
\]

\begin{proof}[Proof of \cref{thm:main2}]
	Fix $\varepsilon>0$. By \cref{thm:main1,prop:real-density-zero-sarnak},
	\[
		\frac{\#\{[\gamma]\in\Pi_{\mathrm{nr}}(L) : |\val(\gamma)-720|\geq\varepsilon\}}{\#\Pi_{\mathrm{nr}}(L)} \leq \frac{\#\{[\gamma]\in\Pi(L) : |\val(\gamma)-720|\geq\varepsilon\}}{\#\Pi(L)} \frac{\#\Pi(L)}{\#\Pi_{\mathrm{nr}}(L)} \to 0.
	\]
	Taking complements proves \cref{thm:main2}.
\end{proof}

\appendix
\crefalias{section}{appendix}
\section{Proof of \texorpdfstring{\cref{thm:pollicott-equidistribution}}{Theorem~\ref{thm:pollicott-equidistribution}}}
\label{sec:Proof-Pollicott}

As explained in \cref{rem:pollicott-route}, \cref{thm:pollicott-equidistribution} can be proved by adapting Pollicott's transfer-operator argument to the continued-fraction suspension used here. In this appendix we give a different proof, which is more arithmetic in flavor. Its main input is Ustinov's effective counting theorem for reduced quadratic irrationals~\cite{Ustinov2013}.

We retain the notation of \cref{section:suspension}. Thus, for $C\in\calP$, the set $O_C\subset\Sigma$ is the corresponding primitive $\sigma$-orbit, and
\[
	\ell(C)=\sum_{\omega\in O_C}r(\omega),
	\qquad
	\overline{r}=\int_\Sigma r\,\dd m_\Sigma =\frac{\pi^2}{6\log 2}.
\]
For a function $\Phi$ on $\Sigma$, put
\[
	S_C(\Phi)\coloneqq\sum_{\omega\in O_C}\Phi(\omega).
\]

\subsection{Counting periodic points in cylinders}

Let $\mathcal R_{\mathrm{red}}$ be the set of reduced quadratic irrationals $w\in I$, equivalently the periodic points of the Gauss map $T$. For $w\in\mathcal R_{\mathrm{red}}$, let $\widetilde{w}$ be its Galois conjugate, let $N=N(w)$ be the least positive even integer such that $T^N w = w$, and write $w=[\overline{a_1,\ldots,a_N}]$. It is known that $-1/\widetilde{w} =[\overline{a_N,\ldots,a_1}]$. Hence, for $\epsilon\in\Z/2\Z$,
\[
	\omega_{w,\epsilon}
	\coloneqq
	\left(w,-\frac{1}{\widetilde{w}},\epsilon\right) = \left([\overline{a_1,\ldots,a_N}], [\overline{a_N,\ldots,a_1}], \epsilon \right) \in \Sigma
\]
has least $\sigma$-period $N$ and determines a primitive suspension orbit $C_{w,\epsilon}\in\calP$. By \eqref{eq:periodic-roof-sum} and \cref{prop:closed-orbit-dictionary,lem:length-preserving},
\begin{align}\label{eq:varrho-suspension}
	\varrho(w) \coloneqq \ell(C_{w,\epsilon}) = -2\sum_{j=0}^{N-1}\log T^j w.
\end{align}
This is independent of $\epsilon$ and agrees with the length used by Ustinov. With this convention, Ustinov's theorem takes the following slightly reformulated form, where we express the cutoff in terms of $\varrho(w)$ and write the main term as a double integral.

\begin{theorem}[{Ustinov~\cite[Theorem~3]{Ustinov2013}}]\label{thm:ubs-count}
	For fixed $\alpha,\beta\in[0,1]$ and every $\varepsilon > 0$, as $L\to\infty$,
	\begin{align*}
		\#\left\{
		w\in\mathcal R_{\mathrm{red}}:\varrho(w)\leq L,\ 0\leq w\leq\alpha,\ 0\leq-\frac{1}{\widetilde{w}}\leq\beta \right\} =
		\frac{e^L}{2\zeta(2)}
		\int_0^\alpha\int_0^\beta
		\frac{\dd u\,\dd v}{(1+uv)^2}
		+O_\varepsilon(e^{(3/4+\varepsilon)L}).
	\end{align*}
\end{theorem}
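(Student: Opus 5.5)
Since \cref{thm:ubs-count} is a restatement of Ustinov's counting theorem, the plan is not to reprove it from scratch but to translate~\cite[Theorem~3]{Ustinov2013} into the present notation. I would check three things in turn: the counted objects agree, the cutoff agrees, and the main term has the stated form. The first step is the dictionary between reduced quadratic irrationals and integer matrices.

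For $w=[\overline{a_1,\ldots,a_N}]\in\mathcal R_{\mathrm{red}}$ with $N$ the least even period, let $p_k/q_k$ be the convergents. The matrix
\[
	M_w=\pmat{p_{N-1}&p_N\\ q_{N-1}&q_N}
\]
corresponds, up to the identification $\matT,\matV$ with the partial quotient matrices, to $\gamma_{\omega_{w,\epsilon}}$. It has determinant $(-1)^N=1$ and fixes $w$ and $\widetilde w$. Its eigenvalue at $w$ is the product $\prod_{j=0}^{N-1}(T^jw)^{-1}$, by the standard identity $q_N+q_{N-1}w'\cdots$ for the product of the complete quotients. Hence by \eqref{eq:varrho-suspension} the larger eigenvalue is $\lambda=e^{\varrho(w)/2}$. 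Ustinov parametrizes reduced irrationals by such matrices with nonnegative entries in a reduced cone, equivalently by solutions of $ad-bc=1$ with $a\ge b$, $d\ge c\ge0$ type inequalities. The two cylinder conditions $w\le\alpha$ and $-1/\widetilde w\le\beta$ become linear conditions on the columns and rows of $M_w$, up to an error from the last partial quotient.

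The second step is the cutoff. Ustinov counts with a bound on either the trace $t=\lambda+\lambda^{-1}$ or on a matrix entry, or on $\lambda$ directly. I would rewrite his condition as $\varrho(w)\le L$ using $\lambda=e^{\varrho/2}$. If his cutoff is $t\le X$, then $\varrho\le L$ is equivalent to $t\le 2\cosh(L/2)=e^{L/2}+O(e^{-L/2})$. Replacing $X^2$ by $e^L$ changes the main term only by $O(1)$, which is absorbed in $O_\varepsilon(e^{(3/4+\varepsilon)L})$. If instead he uses an entry bound such as $q_N\le X$, I would compare $q_N$ with $\lambda$ using $\lambda=q_N+q_{N-1}\widetilde{w}'$-type identities, so that $\lambda/q_N\in(1,2)$ is an explicit function of $(w,-1/\widetilde w)$. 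Absorbing this function into the density is exactly what produces the weight $(1+uv)^{-2}$.

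The third step, which I expect to be the main obstacle, is bookkeeping. I would handle it in this order.
\begin{enumerate}
	\item Match Ustinov's main-term constant and region, which may be written in other variables such as $(w,\widetilde w)$ or convergent ratios, with $\frac{1}{2\zeta(2)}\int_0^\alpha\int_0^\beta(1+uv)^{-2}\,\dd u\,\dd v$. I would do this by the change of variables $v=-1/\widetilde w$. As a sanity check at $\alpha=\beta=1$, the total $\frac{e^L\log2}{2\zeta(2)}$ must be consistent with $\#\Pi(L)\sim e^L/L$ after summing over $w$ with the weight $1/N$. This check is only heuristic, since the count here is over points rather than orbits.
	\item Handle the parity convention. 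If Ustinov counts all periodic points with their least period, possibly odd, then points whose least period is odd must have $N$ doubled. Matrices of determinant $-1$ arise only from odd periods, and these should be discarded or squared. The squared ones have $\lambda\le e^{L/4}$ in the original count, so they contribute $O(e^{L/2})$, which is negligible.
	\item Note that the boundary points $w=\alpha$ and $-1/\widetilde w=\beta$ have measure zero, and that the error $e^{(3/4+\varepsilon)L}$ is inherited directly from Ustinov's Kloosterman-sum estimate.
\end{enumerate}
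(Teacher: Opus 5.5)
Your proposal is correct and follows the paper's route. The paper does not reprove this count either: it quotes Ustinov's Theorem~3, asserts that $\varrho(w)$ agrees with Ustinov's length so that the cutoff can be stated as $\varrho(w)\le L$ (your identity $e^{\varrho(w)/2}=q_N+q_{N-1}w=\prod_{j=0}^{N-1}(T^jw)^{-1}$, the larger eigenvalue of the determinant-one period matrix for the least even period, is what underlies that identification), and writes the main term, which equals $\frac{e^L}{2\zeta(2)}\log(1+\alpha\beta)$, as a double integral.

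One caution about your parity branch, though it is not needed once $\varrho(w)$ is identified with Ustinov's length. A count that included determinant $-1$ period matrices under their own eigenvalue cutoff $e^{L/2}$ would contain on the order of $e^{L}$ of them. Discarding them would therefore change the main term, not merely cost $O(e^{L/2})$.
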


For $\omega=(x,y,\epsilon_\omega)\in\Sigma$, write
\[
	x=[a_1(\omega),a_2(\omega),\ldots], \qquad
	y=[a_0(\omega),a_{-1}(\omega),\ldots].
\]
By a \emph{cylinder} we mean a set obtained by prescribing the parity and finitely many consecutive digits $a_j(\omega)$. For $\epsilon\in\Z/2\Z$ and a finite word
$\mathbf b=(b_1,\ldots,b_q)\in\N^q$, put
\begin{align*}
	I_{\mathbf b} &\coloneqq \left\{w=[a_1,a_2,\ldots]\in I: a_j=b_j\ \text{for }1\leq j\leq q \right\},\\
	A(\epsilon;\mathbf b) &\coloneqq \left\{\omega\in\Sigma: \epsilon_\omega=\epsilon,\ a_j(\omega)=b_j\ \text{for }1\leq j\leq q \right\}.
\end{align*}
The set $I_{\mathbf b}$ is the intersection of $I$ with an interval having rational endpoints, so no element of $\mathcal R_{\mathrm{red}}$ lies on its boundary.

\begin{lemma}\label{lem:marked-cylinder-count}
	For every $\epsilon\in\Z/2\Z$, every finite word $\mathbf b$, and every $L>0$,
	\begin{align}\label{eq:exact-cylinder-count}
		\sum_{C\in\calP(L)} S_C\bigl(\mathbf{1}_{A(\epsilon;\mathbf b)}\bigr)
		= \#\left\{w\in\mathcal R_{\mathrm{red}}: \varrho(w)\leq L,\ w\in I_{\mathbf b} \right\}.
	\end{align}
\end{lemma}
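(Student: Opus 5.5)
The plan is to build an explicit bijection between the set counted on the left of \eqref{eq:exact-cylinder-count} and the set of reduced quadratic irrationals counted on the right. The left-hand side counts pairs $(C,\omega)$ with $C\in\calP(L)$ and $\omega\in O_C\cap A(\epsilon;\mathbf b)$. So I will define a map $\Theta$ on this set by sending $(C,\omega)$, with $\omega=(x,y,\epsilon)$, to $w\coloneqq x$. I then check that $\Theta$ is well defined, injective, and surjective onto the right-hand set.

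\textbf{Well-definedness.} Since $\omega$ lies on a periodic $\sigma$-orbit, $x$ is a periodic point of $T$. Hence $x=[\overline{a_1,\ldots,a_N}]\in\mathcal R_{\mathrm{red}}$, where $N$ is the least $\sigma$-period of $\omega$.

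The prescribed digits $a_j(\omega)=b_j$ for $1\leq j\leq q$ are exactly the condition $x\in I_{\mathbf b}$.

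Next I need the second coordinate to be determined by $x$. As in the proof of \cref{prop:closed-orbit-dictionary}, a $\sigma$-periodic $\omega$ has the form $\bigl([\overline{a_1,\ldots,a_N}],[\overline{a_N,\ldots,a_1}],\epsilon\bigr)$, so $y=-1/\widetilde{x}$ by the cited fact. It follows that $\omega=\omega_{x,\epsilon}$ and $C=C_{x,\epsilon}$. Therefore $\ell(C)=\varrho(x)\leq L$ by \eqref{eq:varrho-suspension}.

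One point needs care: the least $\sigma$-period $N$ of $\omega$ must agree with the $N(w)$ in the definition of $\varrho$. That definition uses the least positive \emph{even} integer with $T^Nw=w$. Because $\sigma$ flips parity, its least period is the least even $T$-period of $x$, so the two agree. This holds even when the least $T$-period is odd, since the $\widehat T$-period of $(x,y)$ equals the $T$-period of $x$ for purely periodic data.

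\textbf{Injectivity.} If $(C,\omega)$ and $(C',\omega')$ both map to $w$, then $\omega=\omega_{w,\epsilon}=\omega'$, because the parity is fixed to be $\epsilon$ and $y$ is forced to equal $-1/\widetilde w$. Consequently $C=C'$, since $C$ is the orbit through $[\omega,0]$.

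\textbf{Surjectivity.} Given $w\in\mathcal R_{\mathrm{red}}\cap I_{\mathbf b}$ with $\varrho(w)\leq L$, the point $\omega_{w,\epsilon}$ lies in $A(\epsilon;\mathbf b)$. Its orbit $C_{w,\epsilon}$ lies in $\calP(L)$, and $\omega_{w,\epsilon}\in O_{C_{w,\epsilon}}$.

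Finally, I must make sure that each $\omega\in O_C$ is counted exactly once in $S_C(\mathbf 1_{A})$. This holds because $O_C$ consists of $N$ distinct points, $N$ being the least period.

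I expect the main obstacle to be the period bookkeeping described above. The bijection itself is routine. The delicate part is showing that the least $\sigma$-period, the primitivity of $C$, and the parity convention all match the \emph{even} period used to define $\varrho$, so that odd-period $w$ are neither lost nor double counted.
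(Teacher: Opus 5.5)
Your proof is correct and is essentially the paper's argument: the paper uses the bijection $w\mapsto\omega_{w,\epsilon}$ from $\mathcal R_{\mathrm{red}}$ onto the $\sigma$-periodic points of parity $\epsilon$, together with the fact that each periodic point lies on a unique primitive orbit, and your map $(C,\omega)\mapsto x$ is simply the inverse of this correspondence. Your explicit check that the least $\sigma$-period equals the least even $T$-period of $x$ spells out a point the paper takes for granted when it asserts that $\omega_{w,\epsilon}$ has least $\sigma$-period $N(w)$.
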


\begin{proof}
	For fixed $\epsilon$, the map $w\mapsto\omega_{w,\epsilon}$ is a bijection from $\mathcal R_{\mathrm{red}}$ onto the periodic points of $\Sigma$ whose parity coordinate is $\epsilon$. By \eqref{eq:varrho-suspension}, its associated primitive orbit has length $\varrho(w)$, and
	\[
		\omega_{w,\epsilon}\in A(\epsilon;\mathbf b) \quad\Longleftrightarrow\quad w\in I_{\mathbf b}.
	\]
	Since every periodic point of $\Sigma$ lies on a unique primitive periodic $\sigma$-orbit, the left-hand side of \eqref{eq:exact-cylinder-count} is the number of periodic points $\omega\in A(\epsilon;\mathbf b)$ whose primitive orbit has length at most $L$. Under the above bijection $w\mapsto\omega_{w,\epsilon}$, these are exactly the $w\in\mathcal R_{\mathrm{red}}$ such that $\varrho(w)\leq L$ and $w\in I_{\mathbf b}$.
\end{proof}

Fix $0 < \varepsilon < 1/4$. Since $-1/\widetilde{w}\in I$ for $w\in\mathcal R_{\mathrm{red}}$, taking $\beta=1$ in \cref{thm:ubs-count} and subtracting at the endpoints of $I_{\mathbf b}$ gives
\[
	\sum_{C\in\calP(L)} S_C\bigl(\mathbf{1}_{A(\epsilon;\mathbf b)}\bigr)
	= \frac{e^L}{2\zeta(2)} \int_{I_{\mathbf b}\times I} \frac{\dd u\,\dd v}{(1+uv)^2} +O_{\varepsilon}\!\left(e^{(3/4+\varepsilon)L}\right).
\]
By the definition of $m_\Sigma$,
\[
	m_\Sigma\bigl(A(\epsilon;\mathbf b)\bigr) = \frac{1}{2\log 2} \int_{I_{\mathbf b}\times I} \frac{\dd u\,\dd v}{(1+uv)^2}.
\]
Hence, since $\overline{r} =\zeta(2)/\log 2$,
\begin{align}\label{eq:cylinder-count-asymptotic}
	\sum_{C\in\calP(L)} S_C\bigl(\mathbf{1}_{A(\epsilon;\mathbf b)}\bigr)
	= \frac{m_\Sigma\bigl(A(\epsilon;\mathbf b)\bigr)}{\overline{r}}e^L +o(e^L).
\end{align}
Every cylinder is a translate under some power of $\sigma$ of a set of the form $A(\epsilon;\mathbf b)$. Since $\sigma$ permutes each $O_C$ and preserves $m_\Sigma$, \eqref{eq:cylinder-count-asymptotic} holds for every cylinder $A$.

We next pass to inverse-length weighted sums by Stieltjes partial summation. For a cylinder $A$, put
\[
	B_A(L) \coloneqq \sum_{C\in\calP(L)}S_C(\mathbf{1}_A).
\]
Since every $\sigma$-period is even and $r>\log 2$, we have $\ell(C)>2\log 2$ for every $C\in\calP$. Hence, for any $0<\ell_0<2\log 2$,
\begin{align}\label{eq:inverse-length-cylinder-count}
	\sum_{C\in\calP(L)} \frac{S_C(\mathbf{1}_A)}{\ell(C)} = \frac{B_A(L)}{L} +\int_{\ell_0}^L\frac{B_A(t)}{t^2}\,\dd t = \frac{m_\Sigma(A)}{\overline{r}}\frac{e^L}{L} +o\left(\frac{e^L}{L}\right).
\end{align}

\subsection{Convergence on the base}

For $C\in\calP$, define
\[
	\nu_C\coloneqq\frac{1}{\ell(C)}\sum_{\omega\in O_C}\delta_\omega,
	\qquad
	\nu_L\coloneqq\frac{1}{\#\calP(L)}\sum_{C\in\calP(L)}\nu_C,
	\qquad
	\nu\coloneqq\frac{1}{\overline{r}}m_\Sigma,
\]
where $\delta_\omega$ denotes the Dirac measure at $\omega$, and $\nu_L$ is considered for sufficiently large $L$. By \cref{prop:closed-orbit-dictionary,lem:length-preserving}, the prime geodesic theorem gives $\#\calP(L)\sim e^L/L$. Hence \eqref{eq:inverse-length-cylinder-count} implies
\begin{align}\label{eq:cylinder-convergence}
	\nu_L(A)\longrightarrow\nu(A)
\end{align}
for every cylinder $A$.

These measures are $\sigma$-invariant, and
\begin{align}\label{eq:roof-first-moment}
	\int_\Sigma r\,\dd\nu_C = \int_\Sigma r\,\dd\nu_L = \int_\Sigma r\,\dd\nu =1.
\end{align}
Indeed, the first integral equals $1$ by $\ell(C)=\sum_{\omega\in O_C}r(\omega)$, the second by averaging, and the third by the definition of $\overline{r}$. Since $r>\log 2$, we also have $\nu_L(\Sigma),\nu(\Sigma)\leq1/\log 2$.

For $j\in\Z$ and an integer $B\geq2$, set
\[
	E_j(B)\coloneqq\{\omega\in\Sigma:a_j(\omega)\geq B\}.
\]
Since $a_1(\sigma^{j-1}\omega)=a_j(\omega)$, \cref{lem:roof-function} gives $r(\sigma^{j-1}\omega)\geq\log B$ on $E_j(B)$. Thus $\sigma$-invariance and \eqref{eq:roof-first-moment} yield
\[
	\nu_L(E_j(B))\leq\frac{1}{\log B},
	\qquad
	\nu(E_j(B))\leq\frac{1}{\log B}.
\]

Enumerate $\Z=\{j_1,j_2,\ldots\}$ and, for $\delta>0$, choose integers $B_m\geq2$ such that $\sum_{m\geq1}(\log B_m)^{-1}<\delta$. Then
\[
	K_\delta \coloneqq \{\omega\in\Sigma:a_{j_m}(\omega)<B_m\text{ for every }m\geq1\}
\]
is compact, since the continued-fraction coordinate map identifies $\Sigma$ homeomorphically with $\N^\Z\times\Z/2\Z$, and $K_\delta$ with a product of finite discrete sets. Moreover, $K_\delta^c\subset\bigcup_{m\geq1}E_{j_m}(B_m)$, so
\begin{align}\label{eq:compact-core}
	\nu_L(K_\delta^c)<\delta,
	\qquad
	\nu(K_\delta^c)<\delta.
\end{align}

Let $\Phi\in C_b(\Sigma)$ and $\eta>0$. Since the cylinders form a clopen basis of $\Sigma$, for every $\omega\in K_\delta$ there exists a cylinder $U_\omega$ containing $\omega$ such that
\[
	|\Phi(\omega')-\Phi(\omega'')|<\eta
	\qquad
	(\omega',\omega''\in U_\omega\cap K_\delta).
\]
By compactness, finitely many such cylinders $U_1,\ldots, U_m$ cover $K_\delta$. Prescribe simultaneously the parity coordinate and all digit coordinates occurring in the definitions of $U_1,\ldots, U_m$. Since each of these digit coordinates takes only finitely many values on $K_\delta$, this yields finitely many pairwise disjoint cylinders $V_1,\ldots,V_s$ that intersect $K_\delta$ and cover it. Moreover, each $V_i$ is contained in some $U_k$.

Choose $\omega_i\in V_i\cap K_\delta$ and put
\[
	\Psi \coloneqq \sum_{i=1}^s \Phi(\omega_i)\mathbf{1}_{V_i}.
\]
Then $\Psi\in C_b(\Sigma)$ and
\[
	\|\Psi\|_\infty\leq\|\Phi\|_\infty,
	\qquad
	|\Phi-\Psi|<\eta \quad\text{on }K_\delta.
\]
For $\mu=\nu_L$ or $\nu$, \eqref{eq:compact-core} gives
\[
	\left|\int_\Sigma(\Phi-\Psi)\,\dd\mu\right|
	\leq
	\frac{\eta}{\log 2}+2\|\Phi\|_\infty\delta.
\]
Applying \eqref{eq:cylinder-convergence} to $\Psi$ and then letting $\eta,\delta\to0$, we obtain
\begin{align}\label{eq:base-weak-convergence}
	\int_\Sigma\Phi\,\dd\nu_L
	\longrightarrow
	\int_\Sigma\Phi\,\dd\nu
\end{align}
for every $\Phi \in C_b(\Sigma)$.

\subsection{Passage to the suspension}

For $F\in C_b(X_{\susp})$, define $F^\sharp \colon \Sigma \to \C$ by
\[
	F^\sharp(\omega) \coloneqq \int_0^{r(\omega)}F([\omega,t])\,\dd t.
\]
Then $F^\sharp$ is continuous and
\begin{equation}\label{eq:column-bound}
	|F^\sharp(\omega)|\leq\|F\|_\infty r(\omega).
\end{equation}

For $M>0$, define
\[
	r_M(\omega)\coloneqq\min\{r(\omega),M\},
	\qquad
	q_M(\omega)\coloneqq r(\omega) - r_M(\omega) = \max\{r(\omega) - M, 0\}.
\]
Since $r_M\in C_b(\Sigma)$, \eqref{eq:base-weak-convergence} and \eqref{eq:roof-first-moment} give
\[
	\int_\Sigma q_M\,\dd\nu_L =1-\int_\Sigma r_M\,\dd\nu_L
	\longrightarrow
	1-\int_\Sigma r_M\,\dd\nu =\int_\Sigma q_M\,\dd\nu.
\]
Set $H_M \coloneqq \{\omega\in\Sigma:r(\omega)>M\}$. Because $r\leq2q_{M/2}$ on $H_M$,
\[
	\limsup_{L\to\infty}\int_{H_M}r\,\dd\nu_L
	\leq2\int_\Sigma q_{M/2}\,\dd\nu.
\]
Since $r\in L^1(\nu)$, the right-hand side tends to $0$ as $M\to\infty$. Thus
\begin{align}\label{eq:roof-tail}
	\lim_{M\to\infty}\limsup_{L\to\infty}
	\int_{H_M}r\,\dd\nu_L=0.
\end{align}

Choose a continuous $\chi_M\colon[0,\infty)\to[0,1]$ equal to $1$ on $[0,M]$ and to $0$ on $[2M,\infty)$. By \eqref{eq:column-bound}, the function $G_M(\omega)\coloneqq F^\sharp(\omega)\chi_M(r(\omega))$ belongs to $C_b(\Sigma)$, so \eqref{eq:base-weak-convergence} applies to $G_M$. Writing
\[
	F^\sharp = G_M+ F^\sharp(1-\chi_M(r)),
\]
the first term is handled by \eqref{eq:base-weak-convergence}, while $|F^\sharp(1-\chi_M(r))| \leq \|F\|_\infty r\,\mathbf1_{H_M}$ by \eqref{eq:column-bound}. Hence
\begin{align*}
	\limsup_{L\to\infty} \left|\int_\Sigma F^\sharp\,\dd\nu_L - \int_\Sigma F^\sharp\,\dd\nu \right|
	&\leq \|F\|_\infty\limsup_{L\to\infty}\int_{H_M}r\,\dd\nu_L + \|F\|_\infty\int_{H_M}r\,\dd\nu.
\end{align*}
Letting $M\to\infty$ and using \eqref{eq:roof-tail}, we conclude that
\begin{align}\label{eq:column-convergence}
	\int_\Sigma F^\sharp \,\dd\nu_L
	\longrightarrow
	\int_\Sigma F^\sharp\,\dd\nu.
\end{align}

Finally,
\[
	\int_\Sigma F^\sharp \,\dd\nu_L = \frac{1}{\#\calP(L)}\sum_{C\in\calP(L)}m_C(F),
\]
while
\[
	\int_\Sigma F^\sharp\,\dd\nu = \frac{1}{\overline{r}} \int_\Sigma\int_0^{r(\omega)}F([\omega,t])\,\dd t\,\dd m_\Sigma(\omega) = \int_{X_{\susp}}F\,\dd m_{\susp}.
\]
Thus \eqref{eq:column-convergence} is precisely \cref{thm:pollicott-equidistribution}.

\subsection*{Acknowledgements}

The author would like to express his sincere gratitude to Masanobu Kaneko, who first introduced him to this problem during his undergraduate years and has provided many valuable insights and suggestions over the years. The author also thanks Paloma Bengoechea and \"{O}zlem Imamo\={g}lu for helpful exchanges on cycle integrals dating back to 2018. This work was supported by JSPS KAKENHI Grant Number JP24K16901. 

\subsection*{AI \& computational resource disclosure}

ChatGPT (GPT-5.6) and OpenAI Codex were used in the preparation of this paper for discussing and refining arguments, locating relevant literature, assisting with the development of Mathematica code, and supporting proof auditing, reference verification, and manuscript revision. All mathematical statements and proofs were independently verified by the author.

\bibliographystyle{amsalpha}
\bibliography{References} 

\end{document}